\documentclass[reqno, 12pt]{amsart}
\makeatletter
\let\origsection=\section \def\section{\@ifstar{\origsection*}{\mysection}} 
\def\mysection{\@startsection{section}{1}\z@{.7\linespacing\@plus\linespacing}{.5\linespacing}{\normalfont\scshape\centering\S}}
\makeatother        
\usepackage{amsmath,amssymb,amsthm}    
\usepackage{mathabx}\changenotsign    
\usepackage{mathrsfs} 
\usepackage{dsfont}

\usepackage{xcolor}  	
\usepackage[backref]{hyperref}
\hypersetup{
	colorlinks,
    linkcolor={red!60!black},
    citecolor={green!60!black},
    urlcolor={blue!60!black},
}

\usepackage{bookmark}

\usepackage[abbrev,msc-links,backrefs]{amsrefs} 
\usepackage{doi}

\renewcommand{\PrintDOI}[1]{\doi{#1}}

\usepackage[babel]{microtype}
\usepackage[T1]{fontenc}
\usepackage{lmodern}

\usepackage[english]{babel}

\linespread{1.3}
\usepackage{geometry}
\geometry{left=27.5mm,right=27.5mm, top=25mm, bottom=25mm}

\usepackage{enumitem}

\let\polishlcross=\l
\def\l{\ifmmode\ell\else\polishlcross\fi}

\def\tand{\ \text{and}\ }
\def\qand{\quad\text{and}\quad}
\def\qqand{\qquad\text{and}\qquad}

\let\emptyset=\varnothing
\let\setminus=\smallsetminus

\makeatletter
\def\moverlay{\mathpalette\mov@rlay}
\def\mov@rlay#1#2{\leavevmode\vtop{   \baselineskip\z@skip \lineskiplimit-\maxdimen
   \ialign{\hfil$\m@th#1##$\hfil\cr#2\crcr}}}
\newcommand{\charfusion}[3][\mathord]{
    #1{\ifx#1\mathop\vphantom{#2}\fi
        \mathpalette\mov@rlay{#2\cr#3}
      }
    \ifx#1\mathop\expandafter\displaylimits\fi}
\makeatother

\newcommand{\dcup}{\charfusion[\mathbin]{\cup}{\cdot}}

\makeatletter
\newcommand{\subalign}[1]{  \vcenter{    \Let@ \restore@math@cr \default@tag
    \baselineskip\fontdimen10 \scriptfont\tw@
    \advance\baselineskip\fontdimen12 \scriptfont\tw@
    \lineskip\thr@@\fontdimen8 \scriptfont\thr@@
    \lineskiplimit\lineskip
    \ialign{\hfil$\m@th\scriptstyle##$&$\m@th\scriptstyle{}##$\crcr
      #1\crcr
    }  }
}
\makeatother  

\newtheorem{theorem}{Theorem}[section]

\newtheorem{lemma}[theorem]{Lemma}

\newtheorem{proposition}[theorem]{Proposition}

\newtheorem{claim}{Claim}

\newtheoremstyle{definition}  {4pt}  {4pt}  {\sl}  {}  {\bfseries}  {.}  {.5em}          {}
\theoremstyle{definition}
\newtheorem{definition}[theorem]{Definition}

\newtheorem{conjecture}[theorem]{Conjecture}

\newtheorem{case}{Case}\usepackage{chngcntr}
\counterwithin*{case}{subsection}
\newtheorem{subcase}{Case}[case]

\theoremstyle{remark}
\newtheorem{remark}[theorem]{Remark}

\newtheoremstyle{introthms}  {3pt}  {3pt}  {\itshape}  {}  {\bfseries}  {.}  {.5em}          {\thmnote{#3}}\theoremstyle{introthms}

\def\paragraph#1{  \smallskip  \noindent\textbf{#1.}\enspace}

\def\eqqed{\pushQED{\qed}\qedhere}

\let\eps=\varepsilon
\let\theta=\vartheta
\let\rho=\varrho
\let\phi=\varphi

\def\iti#1{{\rm ({\it #1\,})}}

\def\NN{\mathds N}

\def\RR{\mathds R}

\def\cA{{\mathcal A}}

\def\cL{{\mathcal L}}
\def\cF{{\mathcal F}}
\def\cB{{\mathcal B}}
\def\cC{{\mathcal C}}

\def\cS{{\mathcal S}}

\def\sS{{\mathscr{S}}}

\def\hW{\hat W}
\def\hZ{\hat Z}

\newcommand{\PP}[1]{{\mathds P}\left(#1\right)}
\newcommand{\EE}[1]{{\mathds E}\left[#1\right]}
\newcommand{\var}[1]{\text{Var}\left[#1\right]}

\def\bp{\boldsymbol{p}}
\def\bq{\boldsymbol{q}}

\DeclareMathOperator{\ex}{ex}
\DeclareMathOperator{\rank}{rank}
\def\tand{\ \text{and}\ }
\def\bH{\boldsymbol{H}}

\def\bdel{\hat b}
\def\ndel{\hat n}

\begin{document}
\title{Extremal results for random discrete structures}

\author{Mathias Schacht}
\address{Fachbereich Mathematik, Universit\"at Hamburg, 
  Hamburg, Germany}
\email{schacht@math.uni-hamburg.de}
\thanks{Author was supported through the \emph{Heisenberg-Programme} of the DFG\@.}

\keywords{Szemer\'edi's theorem, Tur\'an's theorem, random graphs, thresholds} 

\subjclass[2010]{05C80 (11B25, 05C35)}

\date{October 16, 2009 (submitted), November 25, 2014 (revised), April 7, 2016 (accepted)}

\begin{abstract}
We study thresholds for extremal properties of random discrete structures.
We determine the threshold for Szemer\'edi's theorem 
on arithmetic progressions in random subsets of the integers and its multidimensional extensions 
and we determine the threshold for Tur\'an-type problems for random graphs and hypergraphs.
In particular, we verify a conjecture of Kohayakawa, \L uczak, and R\"odl for Tur\'an-type problems 
in random graphs.
Similar results were obtained independently by Conlon and Gowers.
\end{abstract}

\maketitle

\section{Introduction}
Extremal problems are widely studied in discrete mathematics. Given a finite set $\Gamma$ 
and a family $\cF$ of subsets of $\Gamma$ an \emph{extremal result} asserts that 
any sufficiently large (or dense) subset $G\subseteq \Gamma$ must contain an element from $\cF$.
Often all elements of $\cF$ have the same size, i.e., $\cF\subseteq \binom{\Gamma}{k}$ for some integer $k$,
where $\binom{\Gamma}{k}$ denotes the family of all $k$-element subsets of $\Gamma$.

For example, if  $\Gamma_n=[n]=\{1,\dotsc,n\}$ and $\cF_n$ consists of  
all $k$-element subsets of~$[n]$ which form an arithmetic progression, then
Szemer\'edi's celebrated theorem~\cite{Sz75} asserts 
that every subset $Y\subseteq [n]$ with $|Y|=\Omega(n)$ 
contains an arithmetic progression of length~$k$.

A well known result from  graph theory, which fits this framework, is Tur\'an's theorem~\cite{Tu41} 
and its generalization due to Erd\H os and Stone~\cite{ErSt46} (see also~\cite{ErSi66}). 
Here $\Gamma_n=E(K_n)$ is the edge set of the complete graph with $n$ vertices and 
$\cF_n$ consists of the edge sets of copies of some fixed graph $F$ (say with $k$ edges) in $K_n$.
Here the Erd\H os-Stone theorem implies that every subgraph $H\subseteq K_n$ 
which contains at least 
$$
\left(1+\frac{1}{\chi(F)-1}-o(1)\right)\binom{n}{2}
$$ 
edges must contain a copy of $F$, where 
$\chi(F)$ denotes the chromatic number of $F$ (see, e.g.,~\cites{Bo78,Bo98,BoMu08,Di10}). The connection with 
the chromatic number was explicitly stated in the work of Erd\H os and Simonovits~\cite{ErSi66}.

We are interested in ``random versions'' of such extremal results. 
We study the
 \emph{binomial model} of random substructures.
 For a finite set  $\Gamma_n$ and a probability $p\in[0,1]$ we denote by 
$\Gamma_{n,p}$ the random subset where every $x\in\Gamma_n$ is included 
in $\Gamma_{n,p}$ independently with probability $p$. In other words, 
$\Gamma_{n,p}$ is the finite probability space on the power set of $\Gamma_n$
in which every elementary event $\{G\}$ for $G\subseteq \Gamma_n$ occurs with probability
$$
\PP{G=\Gamma_{n,p}}=p^{|G|}(1-p)^{|\Gamma_n|-|G|}\,.
$$
For example, if $\Gamma_n$ is the edge
set of the complete graph on $n$ vertices, then $\Gamma_{n,p}$ denotes the usual 
binomial random graph $G(n,p)$ (see, e.g.,~\cites{Bo01,JLR00}).

The deterministic extremal results mentioned earlier can be viewed as statements which hold with 
probability $1$ for $p=1$ and it is natural to investigate the asymptotic of the smallest probabilities
for which those results hold. In the context of Szemer\'edi's theorem
for every $k\geq 3$ and $\eps>0$ we are interested in the smallest sequence $\bp=(p_n)_{n\in\NN}$ 
of probabilities
such that the binomial random subset $[n]_{p_n}$ has  asymptotically almost surely
(a.a.s., i.e.\ with probability tending to $1$ as $n\to\infty$) the following property:
Every subset $Y\subseteq [n]_{p_n}$ with $|Y|\geq \eps |[n]_{p_n}|$ contains an arithmetic progression 
of length~$k$. Similarly, in the context of the Erd\H os-Stone theorem, 
for every graph $F$ and $\eps>0$ we are interested in the asymptotic of the smallest 
sequence $\bp=(p_n)_{n\in\NN}$  such that the random graph $G(n,p_n)$ a.a.s.\ satisfies:
every $H\subseteq G(n,p)$ with 
$$
e(H)\geq \left(1-\frac{1}{\chi(F)-1}+\eps\right)e(G(n,p_n))\,,
$$ 
contains a copy of~$F$.

We determine the asymptotic growth of the smallest such sequence $\bp$ of probabilities 
for those and some related extremal properties including multidimensional versions of 
Szemer\'edi's theorem (Theorem~\ref{thm:FK}), 
solutions of density regular systems of equations (Theorem~\ref{thm:dr}), 
an extremal version for solutions of the  Schur equation (Theorem~\ref{thm:Schur}), and extremal problems for hypergraphs (Theorem~\ref{thm:Tur}). 
In other words, 
we determine the \emph{threshold} for those properties. Similar results were obtained by Conlon and Gowers~\cite{CG}.

The new results will 
follow from a general result (see Theorem~\ref{thm:main}), 
which allows us to transfer certain extremal results from the classical deterministic setting 
to the probabilistic setting. In Section~\ref{sec:main_results_pf} we deduce the results 
stated in the next section from Theorem~\ref{thm:main}.

\section{New results}
\label{sec:main_results}

\subsection{Szemer\'edi's theorem and its multidimensional extension}
\label{sec:szem}
We study extremal properties of random subsets of the first $n$ positive integers.
One of the best known extremal-type results for the integers is Szemer\'edi's theorem.
In 1975 Szemer\'edi solved a longstanding conjecture of Erd\H os and Tur\'an~\cite{ErTu36} by showing 
that every subset of the integers of upper positive density contains 
an arithmetic progression of any finite length.
For a set $X\subseteq [n]$ we write
\begin{equation}\label{eq:notSz}
X\rightarrow_\eps [k]
\end{equation}
for the statement that every subsets $Y\subseteq X$ with $|Y|\geq \eps |X|$ contains an arithmetic progression 
of length~$k$. With this notation at hand, we can state (the finite version of) Szemer\'edi's theorem as follows:
for every integer $k\geq 3$ and $\eps>0$ there exists $n_0$ such that for every $n\geq n_0$ 
we have $[n]\rightarrow_\eps [k]$.

For fixed $k\geq 3$ and $\eps>0$ we are interested in the asymptotic behavior of the 
threshold sequence of probabilities $\bp=(p_n)$ such that 
there exist constants $0<c<C$ for which
\begin{equation}\label{eq:SzThr}
\lim_{n\to\infty}\PP{[n]_{q_n}\rightarrow_\eps [k]}=
\begin{cases} 
0, & \text{if}\ q_n\leq cp_n\ \text{for all}\ n\in\NN,\\
1, & \text{if}\ q_n\geq Cp_n\ \text{for all}\ n\in\NN.
\end{cases}
\end{equation}
\begin{remark}
We note that the family $\{X\subseteq [n]\colon X\rightarrow_\eps [k]\}$  is not 
closed under supersets. In other words, the property is ``$X\rightarrow_\eps [k]$''
is not a monotone property. However, similar arguments as presented in~\cite{JLR00}*{Proposition~8.6}
show that the property ``$X\rightarrow_\eps [k]$'' and the other properties considered in this 
section have a threshold as displayed in~\eqref{eq:SzThr}.
\end{remark}

It is easy to see that if the expected number of arithmetic progressions 
of length~$k$ in~$[n]_{q_n}$ is asymptotically smaller than the expected number of elements in 
$[n]_{q_n}$, then there exists a subset of size $(1-o(1))|[n]_{q_n}|$, which contains no arithmetic progressions 
of length $k$ at all. In other word, if 
\begin{equation}\label{eq:thrSz}
q_n^kn^2\ll q_nn
\qquad\Longleftrightarrow\qquad
q_n\ll n^{-1/(k-1)}
\end{equation}
then $\PP{[n]_{q_n}\rightarrow_\eps [k]}\to 0$ for every $\eps<1$. Consequently, $n^{-1/(k-1)}$
is a lower bound on the threshold for Szemer\'edi's theorem for arithmetic progressions of length $k$.
For $k=3$ Kohayakawa, \L uczak, and 
R\"odl~\cite{KLR96} established a matching upper bound. Our first result generalizes this 
for arbitrary $k\geq 3$.

\begin{theorem}
\label{thm:Sz}
For every integer $k\geq 3$ and every $\eps\in(0,1)$ there exist constants $C>c>0$ such that
for any sequence of probabilities $\bq=(q_n)_{n\in\NN}$ we have 
$$
\lim_{n\to\infty}\PP{[n]_{q_n}\rightarrow_\eps [k]}=
\begin{cases} 
0, & \text{if}\ q_n\leq cn^{-1/(k-1)}\ \text{for all}\ n\in\NN,\\
1, & \text{if}\ q_n\geq Cn^{-1/(k-1)}\ \text{for all}\ n\in\NN.
\end{cases}
$$
\end{theorem}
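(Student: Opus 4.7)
The plan is to derive Theorem~\ref{thm:Sz} directly from the general transference result Theorem~\ref{thm:main} applied to the setup $\Gamma_n = [n]$ and $\cF_n = \{A \subseteq [n]:A\text{ is a }k\text{-term arithmetic progression}\}$, with Szemer\'edi's theorem itself serving as the underlying dense extremal statement. The $0$-statement of the threshold is already implicit in the calculation around~\eqref{eq:thrSz}: for $q_n \le c\,n^{-1/(k-1)}$ one has $\EE{\#\, k\text{-APs in }[n]_{q_n}} = \Theta(q_n^k n^2) = o(q_n n) = o(\EE{|[n]_{q_n}|})$, so Markov's inequality combined with a one-element-per-AP deletion produces, a.a.s., an AP-free subset of $[n]_{q_n}$ of relative density $1-o(1)$, witnessing that $[n]_{q_n} \rightarrow_\eps [k]$ fails for every $\eps<1$.

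For the $1$-statement the plan is to verify the input hypotheses of Theorem~\ref{thm:main} for this family $\cF_n$. The threshold prediction $p^\ast_n \asymp n^{-1/(k-1)}$ arises from the balance $p^k |\cF_n| \asymp p |\Gamma_n|$, using $|\cF_n| = \Theta(n^2)$ and $|\Gamma_n|=n$. The quantitative ``boundedness/regularity'' conditions that Theorem~\ref{thm:main} demands on $\cF_n$ should follow from entirely elementary arithmetic counts: every element of $[n]$ lies in $\Theta(n)$ progressions in $\cF_n$, any two distinct $k$-APs share at most $k-1$ elements, and more generally, for each $2\le j\le k$, every $j$-element subset of $[n]$ is contained in at most a constant $C_k$ progressions of $\cF_n$. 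These uniform degree/codegree bounds are precisely what one needs to control the distribution of sub-collections of $\cF_n$ over $[n]$, which is the form in which Theorem~\ref{thm:main} absorbs the structural information about the extremal hypergraph.

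The main obstacle is therefore not in the AP-specific deduction but in Theorem~\ref{thm:main} itself. Granting that general transfer principle, the proof of Theorem~\ref{thm:Sz} amounts to feeding in Szemer\'edi's theorem, the trivial threshold computation, and the elementary AP counts above. The genuine work takes place one level up, where one must pass from the deterministic Szemer\'edi bound on a dense subset of $[n]$ to the conclusion that, a.a.s., \emph{every} $(1-\eps)$-small subset of $[n]_{q_n}$ still contains a $k$-AP at the first-moment threshold. I would expect Theorem~\ref{thm:main} to carry out this passage by a covering/partitioning argument that reduces any would-be large AP-free subset of $\Gamma_{n,p}$ to a deterministic configuration forbidden by the dense statement — so that, from the viewpoint of Theorem~\ref{thm:Sz}, the only items to check are the list above.
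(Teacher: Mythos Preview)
Your plan for the 1-statement is exactly the paper's: encode $k$-APs as the edges of a $k$-uniform hypergraph on $V_n=[n]$, set $p_n=n^{-1/(k-1)}$ and $\alpha=0$, and feed this into Theorem~\ref{thm:main}. The codegree facts you list (each point in $\Theta(n)$ progressions; any two points in $O_k(1)$ progressions) are precisely what the paper uses to verify $(K,\bp)$-boundedness. One small correction: the $\alpha$-denseness hypothesis (Definition~\ref{def:adense}) demands not merely one AP but $\zeta|E_n|=\Theta(n^2)$ of them, i.e.\ the supersaturated form of Szemer\'edi's theorem; this follows from Szemer\'edi via the standard Varnavides averaging, but you should say so rather than cite ``Szemer\'edi's theorem itself''.

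Your 0-statement, however, has a real gap at the boundary. If $q_n=cn^{-1/(k-1)}$ then $q_n^{k-1}n=c^{k-1}$, so the expected number of $k$-APs is $\Theta(q_n^k n^2)=c^{k-1}\cdot q_n n$, which is $\Theta(q_n n)$ rather than $o(q_n n)$. Consequently Markov's inequality only yields $\PP{Z\ge \tfrac{1-\eps}{2}\,q_n n}\le \tfrac{2c^{k-1}}{1-\eps}$, a fixed constant, not $o(1)$; your deletion argument as written does not produce an a.a.s.\ conclusion in this range. The paper handles this by a case split: your argument is valid when $q_n\ll n^{-1/(k-1)}$, but for $n^{-2/k}\ll q_n\le cn^{-1/(k-1)}$ one computes $\var{Z}=O(q_n^{2k-1}n^3+q_n^k n^2)$ and applies Chebyshev's inequality to get $Z\le 2q_n^k n^2$ a.a.s., after which the choice of $c$ (small enough that $2c^{k-1}<1-\eps$) makes the deletion work. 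You need this second-moment step.
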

We remark that the 0-statement in Theorem~\ref{thm:Sz} 
(and, similarly, the 0-statements of the other results of this section) 
follows from standard probabilistic arguments. The 1-statement of 
Theorem~\ref{thm:Sz} follows from our main  result, Theorem~\ref{thm:main}.

A multidimensional version of Szem\'eredi's theorem was 
obtained by Furstenberg and Katznelson~\cite{FuKa78}. Those authors showed that 
for every integer $\l$, every finite subset $F\subset \NN^\l$ and every $\eps>0$ there exists some integer 
$n_0$ such that for $n\geq n_0$ every $Y\subseteq [n]^\l$ with $|Y|\geq \eps n^\l$
contains a homothetic copy of $F$, i.e., there exist some $y_0\in\NN^\l$
and $\lambda> 0$ such that $y_0+\lambda F=\{y_0+\lambda f\colon f\in F\}\subseteq Y$.
Clearly, the case $\l=1$ and $F=[k]$ resembles Szemer\'edi's theorem. 
Generalizing the notation introduced in~\eqref{eq:notSz},
for sets $X$, $F\subseteq \NN^\l$ and for~$\eps>0$
we write $X\rightarrow_{\eps} F$, if every subset $Y\subseteq X$ with $|Y|\geq \eps |X|$ 
contains a homothetic copy of $F$. 

A simple heuristic, similar to the one in the context of Szem\'eredi's
theorem,  suggests that $n^{-1/(|F|-1)}$ is a lower bound on 
the threshold for the Furstenberg-Katznelson theorem for a configuration 
$F\subseteq \NN^\l$ in the binomial random subset $[n]^\l_{p}$ where elements of $[n]^\l$
are included with probability $p$. 
Our next result shows that, in fact, this gives the correct asymptotic 
for the threshold.
\begin{theorem}
\label{thm:FK}
For every integer $\l\geq 1$, every finite set $F\subseteq \NN^\l$ with $|F|\geq 3$,
 and every constant $\eps\in(0,1)$ there exist $C>c>0$ such that
for any sequence of probabilities $\bq=(q_n)_{n\in\NN}$ we have 
$$
\lim_{n\to\infty}\PP{[n]^\l_{q_n}\rightarrow_\eps F}=
\begin{cases} 
0, & \text{if}\ q_n\leq cn^{-1/(|F|-1)}\ \text{for all}\ n\in\NN,\\
1, & \text{if}\ q_n\geq Cn^{-1/(|F|-1)}\ \text{for all}\ n\in\NN.
\end{cases}
$$
\end{theorem}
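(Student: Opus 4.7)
The plan is to derive Theorem~\ref{thm:FK} by combining the general transference result (Theorem~\ref{thm:main}) with the classical Furstenberg--Katznelson theorem, treating the 0- and 1-statements separately.

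For the 0-statement I would carry out the first-moment/deletion heuristic indicated after the statement. Parametrize each homothetic copy $y_0+\lambda F$ by the pair $(y_0,\lambda)$; the requirement $y_0+\lambda F\subseteq[n]^\l$ forces $\lambda=O(n)$ and confines $y_0$ to a box of volume $\Theta(n^\l)$, so the total number of such copies is $\Theta(n^{\l+1})$. Hence the expected number of copies surviving in $[n]^\l_{q_n}$ is $\Theta(n^{\l+1}q_n^{|F|})$, whereas $\EE{|[n]^\l_{q_n}|}=n^\l q_n$. When $q_n\leq cn^{-1/(|F|-1)}$ the ratio of these two expectations equals $\Theta(c^{|F|-1})$, which is below $1-\eps$ once $c$ is small enough; Markov's inequality together with the standard concentration of $|[n]^\l_{q_n}|$ then allows us, a.a.s., to delete one vertex from every surviving copy and retain a subset of $[n]^\l_{q_n}$ of density at least $\eps$.

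For the 1-statement I would apply Theorem~\ref{thm:main} to the $|F|$-uniform hypergraph $\cH_n$ with vertex set $\Gamma_n=[n]^\l$ whose edges are the homothetic copies of $F$ in $[n]^\l$. Two ingredients are needed. First, the \emph{density input}: the deterministic Furstenberg--Katznelson theorem, boosted by a standard averaging over affine sub-boxes, yields a supersaturated statement asserting that every $Y\subseteq[n]^\l$ with $|Y|\geq\eps n^\l$ contains $\Omega_\eps(n^{\l+1})$ edges of $\cH_n$, i.e.\ a positive proportion of all edges. Second, the \emph{regularity input}: since any two distinct vertices $v_1=y_0+\lambda f_1$ and $v_2=y_0+\lambda f_2$ of a common copy satisfy $v_2-v_1=\lambda(f_2-f_1)$, specifying the pair $(v_1,v_2)$ together with the assignment $(f_1,f_2)\in F^2$ determines $\lambda$ and $y_0$ uniquely; consequently every pair of vertices lies in at most $|F|(|F|-1)$ edges of $\cH_n$, and \emph{a fortiori} the $j$-codegree is $O_F(1)$ for all $j\geq 2$. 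With these bounds in hand $\cH_n$ is essentially as regular as possible at the threshold $nq_n^{|F|-1}\asymp 1$, the regime in which Theorem~\ref{thm:main} operates.

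The principal obstacle I expect is checking that the specific regularity/boundedness hypothesis required by Theorem~\ref{thm:main}---plausibly a Kohayakawa--\L uczak--R\"odl-type condition demanding pseudorandom behavior of $\cH_n$ on sparse vertex subsets---is indeed satisfied by the hypergraph of homothetic copies. The strong codegree bound above suggests that this verification is mostly bookkeeping, paralleling the one-dimensional case $F=[k]$ handled by Theorem~\ref{thm:Sz}. Once this has been certified, Theorem~\ref{thm:main} combined with the supersaturated Furstenberg--Katznelson input produces the constant $C$ for which $\PP{[n]^\l_{q_n}\rightarrow_\eps F}\to 1$ whenever $q_n\geq Cn^{-1/(|F|-1)}$, completing the proof.
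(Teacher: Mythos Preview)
Your approach to the 1-statement matches the paper's: encode homothetic copies of $F$ as edges of an $|F|$-uniform hypergraph on $[n]^\l$, invoke a supersaturated Furstenberg--Katznelson theorem for $0$-denseness, and use the codegree bound (two points determine the copy up to $O_F(1)$ choices) to verify $(K,\bp)$-boundedness. That verification is indeed bookkeeping along the lines you indicate: $\mu_i(H_n,q)$ counts pairs of copies sharing a vertex with at least $i$ further points of each in the random set; pairs overlapping in exactly one point contribute $O(q^{2i}n^{\l+2})$, while pairs overlapping in two or more points number only $O(n^{\l+1})$ in total and contribute $O(q^{i} n^{\l+1})=O(q^{2i}n^{\l+2})$ because $q\geq n^{-1/(|F|-1)}\geq n^{-1/i}$.

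Your 0-statement, however, has a genuine gap: Markov's inequality is not enough at the threshold. When $q_n=c\,n^{-1/(|F|-1)}$, the expected number $\EE{X}$ of copies in $[n]^\l_{q_n}$ and the expected size $q_n n^\l$ are of the \emph{same} order $n^{\l-1/(|F|-1)}$, with ratio $\Theta(c^{|F|-1})$. Markov therefore only yields $\PP{X\geq (1-\eps)q_n n^\l}\leq \Theta\big(c^{|F|-1}/(1-\eps)\big)$, a constant bounded away from~$0$, so you cannot conclude that the deletion succeeds a.a.s. What you need is concentration of $X$ about its mean, and the paper supplies exactly this via a second-moment computation and Chebyshev's inequality (its Case~3); your codegree bound again controls the variance, giving $\var{X}=O(q_n^{2|F|-1}n^{\l+2}+q_n^{|F|}n^{\l+1})=o\big((\EE{X})^2\big)$ in this range. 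A minor further point: your appeal to ``standard concentration of $|[n]^\l_{q_n}|$'' is unavailable when $q_n n^\l\not\to\infty$; the paper treats that ultra-sparse regime separately (its Case~1) by showing via Markov that a.a.s.\ there are no copies at all.
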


\subsection{Density regular matrices}
Another extension of Szemer\'edi's theorem leads to the notion of \emph{density regular} matrices. Arithmetic progressions of length~$k$ can be viewed as the set of distinct-valued
solutions of the following homogeneous system of $k-2$ linear equations
\[
\begin{array}{ccccccc}
x_1 &- &2x_2 &+ &x_3 &= &0\,,\\
x_2 & -&2x_3 &+ &x_4 &= &0\,,\\
\vdots & &\vdots&&\vdots&&\vdots\,\,\\
x_{k-2}& -&2x_{k-1}& + &x_k &=&0\,.
\end{array}
\]
More generally, for an $\l\times k$ integer matrix $A$ let $\cS(A)\subseteq \RR^k$ be 
the set of solutions of the homogeneous system of linear equations given by~$A$. Let 
$\cS_0(A)\subseteq \cS(A)$ be those solutions $(x_1,\dots,x_k)$ with all 
$x_i$ being distinct. We say $A$ is \emph{irredundant} if $\cS_0(A)\neq\emptyset$.
Moreover, an irredundant $\l\times k$ integer matrix~$A$ is \emph{density regular}, 
if for every $\eps>0$ there exists an~$n_0$ such that for all
$n\geq n_0$ and every $Y\subseteq [n]$ with $|Y|\geq \eps n$ we have $Y^k\cap \cS_0(A)\neq \emptyset$.
Szemer\'edi's theorem, for example, implies that the following $(k-2)\times k$ matrix 
\begin{equation}\label{eq:SzA}
\begin{pmatrix}
1 & -2 & 1 & 0 & 0 & \cdots & 0 & 0 &0 \\
0 & 1 & -2 & 1 & 0 & \cdots & 0 & 0 &0 \\
&&&&&\ddots\\
0  & 0 & 0 & 0 & 0 & \cdots & 1 &-2 &1
\end{pmatrix}
\end{equation}
is density regular for any $k\geq 3$.

Density regular matrices are a subclass of so-called partition regular matrices. This class 
was  studied and characterized by Rado~\cite{Ra33} and,  for example, it follows from this characterization that $k\geq \l+2$ (see~\cite{GRS90} for details). In~\cite{FGR88} Frankl, Graham, and R\"odl characterized irredundant, density regular matrices, 
being those partition regular matrices~$A$ for which $(1,1,\dotsc,1)\in\cS(A)$. 

Similar as in the context of Theorem~\ref{thm:Sz} and Theorem~\ref{thm:FK} the following notation will be useful.
For an irredundant, density regular, $\l\times k$ integer matrix $A$, $\eps>0$, and $X\subseteq [n]$ we write
$X\rightarrow_\eps A$ if for every $Y\subseteq X$ with $|Y|\geq \eps |X|$ we have $Y^k\cap \cS_0(A)\neq \emptyset$. The following parameter in connection with Ramsey properties of random subsets of the integers 
with respect to irredundant, partition regular matrices was introduced by R\"odl and Ruci\'nski~\cite{RR97}.

Let $A$ be an $\l\times k$ integer matrix 
and let the columns be indexed by $[k]$.
For  a partition $W\dcup \overline{W}\subseteq [k]$ of the columns of $A$, we denote by 
$A_{\overline{W}}$ the matrix obtained from $A$ by restricting to the columns
indexed by $\overline{W}$. Let $\rank(A_{\overline{W}})$ be the rank of~$A_{\overline{W}}$, where  $\rank(A_{\overline{W}})=0$  for ${\overline{W}}=\emptyset$. We set
\begin{equation}\label{eq:mA}
m(A)=\max_{\substack{W\dcup \overline{W}=[k]\\ |W|\geq 2}} \frac{|W|-1}{|W|-1+\rank(A_{\overline{W}})-\rank(A)}\,.
\end{equation}
It was shown in~\cite{RR97}*{Proposition~2.2~\iti{ii}} that 
for irredundant, partition regular matrices~$A$ the denominator of~\eqref{eq:mA} is always at least $1$.
For example, for $A$ given in~\eqref{eq:SzA} we have $m(A)=k-1$.

It follows from the 0-statement of Theorem~1.1 in~\cite{RR97} that for any irredundant, density regular, $\l\times k$ integer matrix $A$ of rank $\l$ and every $1/2>\eps>0$ there exist a $c>0$ such that 
for every sequence of probabilities $\bq=(q_n)$ with $q_n\leq cn^{-1/m(A)}$ we have
\begin{equation}\label{eq:drlb}
\lim_{n\to\infty} \PP{[n]_{q_n}\rightarrow_\eps A}=0\,.
\end{equation}
We shall deduce a corresponding upper bound from Theorem~\ref{thm:main} and obtain the following result. 
\bigskip

\begin{theorem}
\label{thm:dr}
For every irredundant, density regular, $\l\times k$ integer matrix $A$ with rank~$\l$,
and every $\eps\in(0,1/2)$ there exist constants $C>c>0$ such that
for any sequence of probabilities $\bq=(q_n)_{n\in\NN}$ we have 
$$
\lim_{n\to\infty}\PP{[n]_{q_n}\rightarrow_\eps A}=
\begin{cases} 
0, & \text{if}\ q_n\leq cn^{-1/m(A)}\ \text{for all}\ n\in\NN,\\
1, & \text{if}\ q_n\geq Cn^{-1/m(A)}\ \text{for all}\ n\in\NN.
\end{cases}
$$
\end{theorem}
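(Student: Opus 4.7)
The 0-statement is precisely the cited R\"odl--Ruci\'nski bound~\eqref{eq:drlb}, so the work lies in the 1-statement, which I would derive from the general transference Theorem~\ref{thm:main}. The setup is to take $\Gamma_n=[n]$ and let the family of forbidden configurations consist of the supports of distinct-valued solutions $(x_1,\dots,x_k)\in\cS_0(A)\cap[n]^k$; the deterministic extremal property required as input to Theorem~\ref{thm:main} -- that every sufficiently dense $Y\subseteq[n]$ satisfies $Y\rightarrow_\eps A$ -- is literally the definition of density regularity of $A$, and is therefore available for free.

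The substance of the argument is to match the balance parameter pegged by Theorem~\ref{thm:main} with the quantity $m(A)$ from~\eqref{eq:mA}. This is a standard first-moment calculation on ``partial solutions'': for a nontrivial column partition $W\dcup\overline W=[k]$ with $|W|\geq 2$, the projection of the $(k-\rank(A))$-dimensional solution space of $Ax=0$ onto the $W$-coordinates has image of dimension $|W|+\rank(A_{\overline W})-\rank(A)$ (since the kernel of that projection is exactly $\{0\}\times\ker(A_{\overline W})$), so the number of $W$-tuples in $[n]^{|W|}$ that extend to integer solutions of $Ax=0$ is of order $n^{|W|+\rank(A_{\overline W})-\rank(A)}$, and the expected number of such tuples lying entirely in $[n]_p$ is $p^{|W|}\,n^{|W|+\rank(A_{\overline W})-\rank(A)}$. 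Setting this equal to the typical size $pn$ of $[n]_p$ and solving for $p$ yields exactly $n^{-(|W|-1+\rank(A_{\overline W})-\rank(A))/(|W|-1)}$, whose maximum over admissible partitions is $n^{-1/m(A)}$ by the definition~\eqref{eq:mA}.

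Once the parameter is matched, verifying the hypotheses of Theorem~\ref{thm:main} for the family $\cF_n$ at $p=Cn^{-1/m(A)}$ reduces to standard first- and second-moment estimates on the counts above, aided by the fact (noted in~\cite{RR97}*{Proposition~2.2~\iti{ii}} as a consequence of irredundance and partition regularity) that the denominator in~\eqref{eq:mA} is always at least $1$, which guarantees $1/m(A)\leq 1$ and places us in the sparse regime where the transference machinery operates. I expect the main technical obstacle to lie in enforcing the requisite balance conditions \emph{uniformly} over all column partitions simultaneously, so that the partition achieving the maximum in~\eqref{eq:mA} controls all the others in the regime $q_n\geq Cn^{-1/m(A)}$; this is analogous to the uniform verification of KLR-type inputs in the graph case treated in Theorem~\ref{thm:Sz}. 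Once the uniform check is in hand, Theorem~\ref{thm:main} immediately yields $[n]_{q_n}\rightarrow_\eps A$ a.a.s., completing the proof.
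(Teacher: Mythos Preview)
Your overall plan---deduce the 0-statement from~\cite{RR97} and the 1-statement from Theorem~\ref{thm:main} with $V_n=[n]$, $E_n$ the (sets underlying) distinct-valued solutions, $p_n=n^{-1/m(A)}$, $\alpha=0$---is the same as the paper's. But there is a genuine gap in how you handle the two nontrivial hypotheses of Theorem~\ref{thm:main}.

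First, the $\alpha$-denseness input is \emph{not} ``literally the definition of density regularity''. Density regularity of $A$ gives you a single element of $\cS_0(A)$ inside every dense $Y\subseteq[n]$; Definition~\ref{def:adense} requires $|E(H_n[U])|\geq\zeta|E_n|$, i.e.\ a supersaturation statement delivering $\Omega(n^{k-\l})$ distinct-valued solutions. This is a separate theorem of Frankl, Graham, and R\"odl~\cite{FGR88}*{Theorem~2}, proved via the Furstenberg--Katznelson theorem, and it is exactly what the paper invokes for condition~\iti{b}. Without it Theorem~\ref{thm:main} does not apply.

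Second, your ``parameter matching'' paragraph is a heuristic for why $n^{-1/m(A)}$ is the right scale, but it is not what Theorem~\ref{thm:main} asks you to check. The hypothesis to verify is $(K,\bp)$-boundedness (Definition~\ref{def:Kbdd}), namely the \emph{second-moment} bound
\[
\mu_i(H_n,q)=\sum_{x\in[n]}\EE{\deg_i^2(x,V_{n,q})}=O\!\left(q^{2i}\,\frac{|E_n|^2}{n}\right)
\]
for every $i\in[k-1]$ and $q\geq n^{-1/m(A)}$. This is a count of \emph{pairs} of solutions sharing a vertex $x$ with prescribed intersection pattern, and it is where the definition of $m(A)$ actually enters. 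The paper does this by, for each $w$-element overlap, forming the $2\l\times(2k-w)$ block matrix
\[
B=\begin{pmatrix}A_{\overline W_1}&A_{W_1}&\boldsymbol 0\\ \boldsymbol 0&A_{W_2}&A_{\overline W_2}\end{pmatrix},
\]
bounding the number of joint solutions by $n^{2k-w-\rank(B)}$, and then using~\cite{RR97}*{Proposition~2.2~\iti{i}} (deleting a single column preserves rank) together with the inequality $q^{w-1}\geq n^{-(w-1-\rank(A_{\overline W_1})+\l)}$ coming from~\eqref{eq:mA} to close the estimate. Your sketch does not identify this quantity or this mechanism; the projection-dimension count you wrote is the right linear-algebra ingredient, but it has to be fed into the pair count for $\mu_i$, not into a first-moment balance of partial solutions against $pn$.
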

Note that we restrict $\eps<1/2$ here. With this restriction 
the 0-statement will follow from a result of R\"odl and Ruci\'nski from~\cite{RR97}.
The proof of the 1-statement presented in Section~\ref{sec:pf_dr} actually works for 
all $\eps\in(0,1)$.

\subsection{An extremal problem related to Schur's equation}
In 1916 Schur~\cite{Schur16} 
showed that every partition of the positive integers into finitely many
classes contains a class which contains a solution of the single, homogeneous equation 
$x_1+x_2-x_3=0$. Clearly, the corresponding matrix $\begin{pmatrix}1 &  1& -1\end{pmatrix}$
is not density regular, since the set of all odd integers contains no solution. However, it 
is not hard to show that every subset $Y\subseteq [n]$ with $|Y|\geq (1/2+o(1)) n$
contains such a solution. Similarly, as above for $\eps>0$ and $X\subseteq [n]$ we write 
$$
X\rightarrow_{1/2+\eps} \begin{pmatrix}1 &  1& -1\end{pmatrix}
$$
if every subset $Y\subseteq X$ with $|Y|\geq (1/2+\eps)|X|$ contains 
a distinct-valued solution, i.e., 
\[
	Y^3\cap \cS_0\left( \begin{pmatrix}1 &  1& -1\end{pmatrix} \right)\neq\emptyset\,.
\]

We are interested in the threshold for the extremal problem of Schur's equation, i.e., for the property $X\rightarrow_{1/2+\eps} \begin{pmatrix}1 &  1& -1\end{pmatrix}$. In this context the 
simple heuristic based on the expected number of solutions of the Schur equation 
in random subsets of the integers suggests that $n^{-1/2}$ is the threshold for this property.
Moreover, for 
Schur's theorem in random subsets of the integers the threshold turned out to be 
$n^{-1/2}$ as shown in~\cites{GRR96,FRS}. We show that the threshold of the extremal version of Schur's 
equation is the same.
\begin{theorem}\label{thm:Schur}
For every $\eps\in(0,1/2)$ there exist constants $C>c>0$ such that
for any sequence of probabilities $\bq=(q_n)_{n\in\NN}$ we have 
$$
\lim_{n\to\infty}\PP{[n]_{q_n}\rightarrow_{1/2+\eps}  \begin{pmatrix}1 &  1& -1\end{pmatrix}}=
\begin{cases} 
0, & \text{if}\ q_n\leq cn^{-1/2}\ \text{for all}\ n\in\NN,\\
1, & \text{if}\ q_n\geq Cn^{-1/2}\ \text{for all}\ n\in\NN.
\end{cases}
$$
\end{theorem}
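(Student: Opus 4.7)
The plan is to separate Theorem~\ref{thm:Schur} into its 0-statement and 1-statement. The 0-statement is a standard first-moment deletion argument. For $q_n \le c n^{-1/2}$, the expected number of Schur triples $\{x,y,x+y\}$ contained in $[n]_{q_n}$ is $O(q_n^3 n^2) = O(c^3 q_n n)$, while $|[n]_{q_n}|$ is concentrated around $q_n n$. Consequently, a.a.s.\ one may delete at most $O(c^3)\cdot|[n]_{q_n}|$ elements to destroy all Schur triples, leaving a Schur-free subset of density $1 - O(c^3)$, which exceeds $1/2 + \eps$ once $c$ is sufficiently small.

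For the 1-statement, the plan is to apply the general transference result, Theorem~\ref{thm:main}, to the $3$-uniform hypergraph $\cH_n$ on vertex set $[n]$ whose edges are the distinct-valued Schur triples $\{x,y,x+y\}$. Here the deterministic extremal (Tur\'an-type) density is exactly $1/2$: the odd integers (equivalently, the interval $(n/2,n]$) form a Schur-free subset of size $\lceil n/2\rceil$, and every subset $Y\subseteq [n]$ with $|Y|\ge(1/2+\alpha)n$ contains a Schur triple. To invoke Theorem~\ref{thm:main} one has to verify the hypotheses typical of such a transference: \iti{i} a supersaturation/counting statement asserting that every $Y\subseteq [n]$ with $|Y|\ge (1/2+\alpha)n$ contains at least $\delta(\alpha)n^2$ Schur triples, and \iti{ii} a uniform-distribution (``$K$-bounded'') condition on $\cH_n$ controlling how many edges contain a given small set of vertices. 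The supersaturation \iti{i} follows by a standard averaging argument from the deterministic density result for Schur's equation (density above $1/2$ forces a distinct-valued solution). The verification of \iti{ii} is essentially identical to the corresponding verification for $3$-term arithmetic progressions used in Theorem~\ref{thm:Sz} with $k=3$, since Schur triples and $3$-APs are both distinct-valued solution sets of a single linear equation in three variables.

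The main obstacle is casting the Schur problem into the abstract framework of Theorem~\ref{thm:main} with the correct Tur\'an-type density $1/2$ rather than $0$ (in contrast to Theorems~\ref{thm:Sz}--\ref{thm:dr}), and identifying the correct threshold exponent. The exponent $1/2$ is the natural parameter for $\cH_n$: the hypergraph has $\Theta(n^2)$ edges on $n$ vertices and every Schur triple is determined by any two of its elements, so the relevant density parameter equals $2$, matching $m(A)$ from \eqref{eq:mA} applied to $A=\begin{pmatrix}1 & 1& -1\end{pmatrix}$ (even though this $A$ is not density-regular, so Theorem~\ref{thm:dr} does not apply directly). Once \iti{i} and \iti{ii} are verified for the Schur hypergraph, the 1-statement follows immediately from Theorem~\ref{thm:main}.
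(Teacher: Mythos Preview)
Your overall strategy matches the paper's: the 1-statement is derived from Theorem~\ref{thm:main} applied to the $3$-uniform Schur hypergraph with $\alpha=1/2$ and $p_n=n^{-1/2}$, and the 0-statement is a deletion argument. Two points need more care, however.

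\textbf{0-statement.} Your first-moment deletion argument does not work at the threshold. For $q_n=cn^{-1/2}$ the expected number of Schur triples is $\Theta(c^3 q_n n)$, i.e.\ of the \emph{same} order as $|[n]_{q_n}|$; Markov's inequality then only gives $\PP{Z_n\ge Kc^3 q_n n}\le O(1/K)$, a constant, not $o(1)$. So you cannot conclude that a.a.s.\ the number of triples is $O(c^3)|[n]_{q_n}|$. The paper fixes this by splitting into ranges and using Chebyshev's inequality (a second-moment computation) in the critical window $n^{-2/3}\ll q_n\le cn^{-1/2}$; you need to do the same.

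\textbf{Supersaturation.} Your claim that \iti{i} ``follows by a standard averaging argument'' is too quick. The usual Varnavides trick averages over translates, but Schur triples are \emph{not} translation-invariant (if $x+y=z$ then $(x+t)+(y+t)\ne z+t$), so the naive averaging fails. The paper instead runs a short density-increment argument: either $A$ has many elements in $((1-\eps)n,n]$, each of which is the sum of $\Omega(\eps n)$ pairs from $A$, or $A\cap[(1-\eps)n]$ has density at least $1/2+3\eps/2$ on the shorter interval, and one iterates. Supersaturation for $x+y=z$ is certainly known (and can also be obtained via Fourier analysis), but you should either supply the argument or cite it rather than appeal to an averaging that does not apply here.
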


\subsection{Extremal problems for hypergraphs}
\label{sec:tur}
The last result we present here deals with extremal problems for hypergraphs.
An $\l$-uniform hypergraph $H$ is a pair $(V,E)$, where 
the vertex set $V$ is some finite set and the edge set 
$E\subseteq \binom{V}{\l}$ is a subfamily of the $\l$-element 
subsets of $V$. As usual we call 
$2$-uniform hypergraphs simply graphs.
For some hypergraph~$H$ we denote by $V(H)$ and $E(H)$
its vertex set and its edge set and we denote by $v(H)$ and $e(H)$ the 
cardinalities of those sets. For an integer $n$ we denote by 
$K_n^{(\l)}$ the complete $\l$-uniform hypergraph on $n$ vertices, i.e., 
$v(K_n^{(\l)})=n$ and $e(K_n^{(\l)})=\binom{n}{\l}$.  
An $\l$-uniform hypergraph $H'$ is a sub-hypergraph of $H$, if $V(H')\subseteq V(H)$ and 
$E(H')\subseteq E(H)$ and we write $H'\subseteq H$ to denote that.
For a subset $U\subseteq V(H)$ we denote by 
$E(U)$ the edges of~$H$ contained in $U$ and we set $e(U)=|E(U)|$. Moreover, we 
write $H[U]$ for the sub-hypergraph induced  on $U$, i.e., $H[U]=(U,E(U))$. 

For two $\l$-uniform hypergraphs~$F$ and $H$ we say $H$ contains a copy of $F$, if there exists 
an injective map $\phi\colon V(F)\to V(H)$ such that $\phi(e)\in E(H)$ for every $e\in E(F)$. 
If $H$ contains no copy of $F$, then we say $H$ is \emph{$F$-free}.
We denote by $\ex(H,F)$ the maximum number of edges of 
an $F$-free sub-hypergraph of $H$, i.e.,
$$
\ex(H,F)=\max\{e(H')\colon H'\subseteq H \tand H' \ \text{is $F$-free}\}\,.
$$

Mantel~\cite{Ma07}, Erd\H os~\cite{Er38}, and Tur\'an~\cite{Tu41} were the first to study this function
for graphs. In particular, Tur\'an determined $\ex(K_n,K_k)$ for all
integers~$n$ and~$k$. This line of research was continued by  Erd\H os and Stone~\cite{ErSt46} and 
Erd\H os and Simonovits~\cite{ErSi66} and those authors
showed that for every graph $F$ with chromatic number $\chi(F)\geq 3$ we have
\begin{equation}\label{eq:ES}
\ex(K_n,F)=\left(1-\frac{1}{\chi(F)-1)}+o(1)\right)\binom{n}{2}\,,
\end{equation}
where $\chi(F)$ is minimum number $r$ such that there exists a partition $V_1\dcup\dots\dcup V_r=V(F)$
such that $E(V_i)=\emptyset$ for every $i\in[r]$. 
Moreover, it follows from the result of K\"ovari, S\'os, and Tur\'an~\cite{KST54}  (see also~\cite{ErSt46})
that 
\begin{equation}\label{eq:KST}
\ex(K_n,F)=o(n^2)
\end{equation}
for graphs~$F$ with $\chi(F)\leq 2$.

For an $\l$-uniform hypergraph $F$ we define the \emph{Tur\'an density}
$$
\pi(F)=\lim_{n\to\infty} \frac{\ex(K^{(\l)}_n,F)}{\binom{n}{\l}}\,.
$$
For a graph $F$ the Tur\'an density $\pi(F)$ is determined due to~\eqref{eq:ES} 
and~\eqref{eq:KST}. For hypergraphs~\eqref{eq:KST} was extended by Erd\H os~\cite{Er64} to $\l$-partite, $\l$-uniform hypergraphs. Here an $\l$-uniform hypergraph $F$ is $\l$-partite if its vertex set can be partitioned into $\l$ classes, such that every edge intersects every partition class in precisely one vertex.
Erd\H os showed that  
$
\pi(F)=0
$ 
for every $\l$-partite, $\l$-uniform hypergraph~$F$. For other $\l$-uniform hypergraphs 
only a few results are known and, for example, determining $\pi(K_4^{(3)})$ is one of the best known 
open problems in the area. However, one can show that $\pi(F)$ indeed exists for every 
hypergraph~$F$ (see, e.g.~\cite{KNS64}).

We study the random variable $\ex(G^{(\l)}(n,q),F)$ for fixed $\l$-uniform hypergraphs~$F$, where~$G^{(\l)}(n,q)$ denotes the binomial random $\l$-uniform sub-hypergraph of $K_n^{(\l)}$ with edges of 
$K_n^{(\l)}$ included independently with probability~$q$. It is easy to show that 
$$
\ex(H,F)\geq \pi(F)e(H)
$$
for all $\l$-uniform hypergraphs~$H$ and~$F$ (see, e.g.~\cite{JLR00}*{Proposition~8.4} for a proof for graphs).
We are interested in the threshold for the property that a.a.s.\ 
\begin{equation}\label{eq:rTur}
\ex(G^{(\l)}(n,q),F)\leq (\pi(F)+o(1)) e(G^{(\l)}(n,q))\,.
\end{equation}

Results of that sort appeared in the work of Babai, Simonovits, and Spencer~\cite{BSS90} who showed 
that~\eqref{eq:rTur} holds random graphs
when $F$ is a clique and $q=1/2$. Moreover, it follows from an earlier result of Frankl and R\"odl~\cite{FR86}
that the same holds for $F=K_3$
as long as $q\gg n^{-1/2}$. The systematic study for graphs was initiated by Kohayakawa 
and his coauthors. In particular, Kohayakawa, \L uczak, and R\"odl formulated  a conjecture for the threshold of 
Tur\'an properties for random graphs (see Conjecture~\ref{conj:KLR} below).

For an $\l$-uniform hypergraph~$F$ with $e(F)\geq 1$ we set
\begin{equation}\label{eq:mF}
m(F)=\max_{\substack{F'\subseteq F\\ e(F')\geq 1}}d(F')
\quad\text{with}\quad 
d(F')=
\begin{cases} 
\frac{e(F')-1}{v(F')-\l}\,, &\text{if}\ v(F')> \l\\
1/\ell\,, &\text{if}\ v(F')=\l\,.
\end{cases}
\end{equation}
It follows from the definition of $m(F)$, that if $q=\Omega(n^{-1/m(F)})$ then a.a.s.\ 
the number of copies of every sub-hypergraph $F'\subseteq F$ in the random hypergraph 
$G^{(\l)}(n,q)$ has at least the same order of magnitude, as the number of edges of $G^{(\l)}(n,q)$. Recall that a 
similar heuristic gave rise 
to the thresholds in the theorem above.

\begin{conjecture}[{\cite{KLR97}*{Conjecture~1~\iti{i}}}]\label{conj:KLR}
For every graph~$F$ with at least one edge and every $\eps>0$ there exists 
$C>0$ such that for every sequence of probabilities $\bq=(q_n)_{n\in\NN}$ with 
$q_n\geq C n^{-1/m(F)}$ we have 
$$
\lim_{n\to\infty}
\PP{\ex(G(n,q_n),F)\leq (\pi(F)+\eps) e(G(n,q_n))}=1\,.
$$
\end{conjecture}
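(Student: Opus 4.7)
The plan is to deduce the conjecture as an instance of the general transference result (Theorem~\ref{thm:main}) applied to the ground set $\Gamma_n = E(K_n)$ together with the family $\cF_n$ consisting of all edge sets of copies of $F$ in $K_n$. In this encoding the random subset $\Gamma_{n,q_n}$ is precisely $G(n,q_n)$, and the event that $\ex(G(n,q_n),F) \leq (\pi(F)+\eps)\,e(G(n,q_n))$ is equivalent to the statement that every subset $Y\subseteq E(G(n,q_n))$ of relative density at least $\pi(F)+\eps$ contains a copy of $F$, i.e., to $E(G(n,q_n))\to_{\pi(F)+\eps} \cF_n$ in the notation of the previous subsections. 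The deterministic input needed to apply Theorem~\ref{thm:main} is then supplied by the Erd\H os-Stone-Simonovits theorem~\eqref{eq:ES} when $\chi(F)\geq 3$, and by the K\H ov\'ari-S\'os-Tur\'an bound~\eqref{eq:KST} when $\chi(F)\leq 2$: in either case, any subgraph of $K_n$ of edge density exceeding $\pi(F)+\eps/2$ contains a copy of $F$.

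I next have to match the probability scale. The parameter $m(F)$ defined in~\eqref{eq:mF} is precisely the exponent at which, for \emph{every} non-trivial $F'\subseteq F$, the expected number of copies of $F'$ in $G(n,q)$ is at least a constant multiple of the expected number of edges: indeed, for $q\geq Cn^{-1/m(F)}$ and any $F'\subseteq F$ with $v(F')>\l=2$, the number of copies of $F'$ is of order $n^{v(F')}q^{e(F')} \geq n^{2}q$ by the definition of $d(F')$ and $m(F)$. This is exactly the balance condition that makes Theorem~\ref{thm:main} applicable, since it guarantees that the ``hypergraph of copies'' on $\Gamma_n$ is sufficiently spread out that every edge participates, in expectation, in at least a constant number of $F$-copies.

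The main step, and the principal obstacle, is to verify the boundedness/concentration hypothesis of Theorem~\ref{thm:main} for the family $\cF_n$: one needs a uniform upper bound on how clustered the copies of $F$ through a given edge of $K_n$ can be. When $F$ is strictly balanced (i.e., $d(F)>d(F')$ for every proper $F'\subsetneq F$), this reduces to standard second-moment bookkeeping on the number of $F$-copies sharing a fixed edge. The delicate case is when the maximum in~\eqref{eq:mF} is attained by some $F'\subsetneq F$: then the $F$-copies through a fixed edge may distribute very unevenly, and a direct counting argument is insufficient. Here I would first localize on the denser substructure $F'$, remove the (few) ``heavy'' edges of $G(n,q_n)$ lying in an abnormally large number of $F$-copies through a standard deletion argument, and then apply Theorem~\ref{thm:main} to the cleaned random graph. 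Combining this deletion step with the deterministic Erd\H os-Stone-Simonovits input inside the transference machinery yields the $1$-statement of Conjecture~\ref{conj:KLR}.
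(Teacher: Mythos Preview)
Your overall plan---encode copies of $F$ as a $k$-uniform hypergraph on $V_n=E(K_n)$ and invoke Theorem~\ref{thm:main} with $\alpha=\pi(F)$ and $p_n=n^{-1/m(F)}$---is exactly the route the paper takes (as the graph case of Theorem~\ref{thm:Tur}). Two points, however, need correction.

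First, a minor one: the $\alpha$-denseness hypothesis of Theorem~\ref{thm:main} is a \emph{supersaturation} statement---any $H\subseteq K_n$ with $e(H)\geq(\pi(F)+\eps)\binom{n}{2}$ must contain $\Omega(n^{v(F)})$ copies of $F$, not just one. The Erd\H os--Stone and K\H ov\'ari--S\'os--Tur\'an theorems give a single copy; you need the supersaturation result of Erd\H os and Simonovits~\cite{ErSi83} (which, granted, follows from the extremal result by a short averaging argument).

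Second, and more substantively: your treatment of the $(K,\bp)$-boundedness condition for non-strictly-balanced $F$ is off track. The condition is a bound on the \emph{expectation} $\mu_i(H_n,q)$, a quantity depending only on the deterministic hypergraph $\bH$ and on $q$; it is not a concentration statement about $G(n,q)$. Hence ``removing heavy edges of $G(n,q_n)$ and applying Theorem~\ref{thm:main} to the cleaned random graph'' does not make sense: Theorem~\ref{thm:main} is formulated for the binomial random subset, and no pre-processing of the random graph can alter whether $\bH$ is $(K,\bp)$-bounded. (A deletion step does occur, but it is internal to the proof of Theorem~\ref{thm:main} via Proposition~\ref{prop:Kpbdd}, not something the user performs.) More importantly, no workaround is needed: the paper verifies $(K,\bp)$-boundedness directly for \emph{every} $F$, balanced or not. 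Writing $\mu_i(H_n,q)$ as a sum over pairs $(F_1,F_2)$ of $F$-copies sharing a fixed edge and grouping by the intersection $J=F_1\cap F_2$, one obtains terms of order $n^{2v(F)-2v(J)}q^{2i-(e(J)-1)}$. The definition of $m(F)$ as a maximum over all subgraphs guarantees $q^{e(J)-1}\geq n^{-(v(J)-2)}$ whenever $q\geq n^{-1/m(F)}$, and this uniformly handles every possible overlap $J\subseteq F$; no case distinction between balanced and unbalanced $F$ arises.
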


Conjecture~\ref{conj:KLR} was verified for a few special cases. As already mentioned for $F=K_3$ 
the conjecture follow from a result in~\cite{FR86}. For $F$ being a clique with $4$, $5$, or $6$ vertices the conjecture was verified by Kohayakawa, \L uczak, and R\"odl~\cite{KLR97}, Gerke, Schickinger, and Steger~\cite{GSS04} and Gerke~\cite{Ge05}. Moreover, the conjecture is known to be true when 
$F$ is a cycle due to the work of F\"uredi~\cite{Fu94} (for the cycle of length four) and 
Haxell, Kohayakawa, and \L uczak~\cites{HKL95,HKL96} (see also~\cites{KKS98,Kre97}) and the conjecture is known to be true for trees.
The best current bounds on $q$ for which~\eqref{eq:rTur} holds for $F$ being a clique and for 
arbitrary $F$ were obtained by Szab\'o and Vu~\cite{SV03} and Kohayakawa, R\"odl, and Schacht~\cite{KRS04}.

We verify this conjecture for all graphs $F$ and the natural analogue of this conjecture for hypergraphs.
(For $\l$-partite, $\l$-uniform hypergraphs such a conjecture was made in~\cite{RRS07}*{Conjecture~15}.)
\begin{theorem}\label{thm:Tur}
For every $\l$-uniform hypergraph~$F$ with at least one vertex contained in at least two edges and every 
$\eps\in(0,1-\pi(F))$ there exist constants $C>c>0$ such that for any sequence of probabilities $\bq=(q_n)_{n\in\NN}$ we have 
\begin{multline*}
\lim_{n\to\infty}\PP{
\ex\big(G^{(\l)}(n,q_n),F\big) 
\leq 
(\pi(F)+\eps)
e\big(G^{(\l)}(n,q_n)\big)}\\
=
\begin{cases} 
0, & \text{if}\ q_n\leq cn^{-1/m(F)}\ \text{for all}\ n\in\NN,\\
1, & \text{if}\ q_n\geq Cn^{-1/m(F)}\ \text{for all}\ n\in\NN.
\end{cases}
\end{multline*}
\end{theorem}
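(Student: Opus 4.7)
The plan is to split the proof of Theorem~\ref{thm:Tur} into the 0-statement (a standard first-moment/deletion argument) and the 1-statement, which is the substantive part and is deduced from the general transference result, Theorem~\ref{thm:main}, with the Erd\H os--Stone--Simonovits theorem and its hypergraph extension via the Tur\'an density $\pi(F)$ serving as deterministic input.

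For the 0-statement, the hypothesis that some vertex of $F$ lies in at least two edges ensures that some $F'\subseteq F$ with $v(F')>\ell$ satisfies $d(F')\geq 1/(\ell-1)>1/\ell$, so the maximum in~\eqref{eq:mF} is attained by some $F^{\ast}\subseteq F$ with $v(F^{\ast})>\ell$ and $d(F^{\ast})=m(F)$. Fix such an $F^{\ast}$. The expected number of labelled copies of $F^{\ast}$ in $G^{(\ell)}(n,q_n)$ is of order $n^{v(F^{\ast})}q_n^{e(F^{\ast})}$, while $e(G^{(\ell)}(n,q_n))$ concentrates around $q_n\binom{n}{\ell}$, and
\begin{equation*}
\frac{n^{v(F^{\ast})}q_n^{e(F^{\ast})}}{q_n\binom{n}{\ell}} \;=\; \Theta\bigl((q_n\, n^{1/m(F)})^{e(F^{\ast})-1}\bigr)
\end{equation*}
tends to $0$ whenever $q_n=o(n^{-1/m(F)})$. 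Hence a.a.s.\ we can delete one edge from each copy of $F^{\ast}$ in $G^{(\ell)}(n,q_n)$, destroying also every copy of $F$, at the cost of only $o(e(G^{(\ell)}(n,q_n)))$ edges; as $\eps<1-\pi(F)$, the resulting $F$-free sub-hypergraph violates the target inequality, which is the 0-statement.

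For the 1-statement, I would apply Theorem~\ref{thm:main} with $\Gamma_n=E(K_n^{(\ell)})$ and $\cF_n$ the family of edge sets of copies of $F$ in $K_n^{(\ell)}$, so that $|\cF_n|=\Theta(n^{v(F)})$ and every member of $\cF_n$ has exactly $e(F)$ elements. The density parameter governing the transference threshold, which is determined by the expected number of copies of sub-hypergraphs of $F$ in $G^{(\ell)}(n,q)$, coincides with $m(F)$ from~\eqref{eq:mF}. The deterministic input is the Tur\'an density statement together with its standard supersaturation upgrade: for every $\delta>0$ there exist $\eta>0$ and $n_0$ such that every $H\subseteq K_n^{(\ell)}$ with $n\geq n_0$ and $e(H)\geq(\pi(F)+\delta)\binom{n}{\ell}$ contains at least $\eta n^{v(F)}$ copies of $F$; this is obtained from the definition $\pi(F)=\lim_n \ex(K_n^{(\ell)},F)/\binom{n}{\ell}$ by an averaging argument over $v(F)$-subsets of $V(H)$. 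Invoking Theorem~\ref{thm:main} with extremal density threshold $\pi(F)+\eps/2$ then yields that a.a.s.\ every $H\subseteq G^{(\ell)}(n,q_n)$ with $e(H)\geq (\pi(F)+\eps)e(G^{(\ell)}(n,q_n))$ contains a copy of $F$ whenever $q_n\geq Cn^{-1/m(F)}$, which is the 1-statement.

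The main obstacle is packaged inside Theorem~\ref{thm:main} itself: one must verify the uniform-distribution and local-counting hypotheses the theorem imposes on the concrete family $\cF_n$, namely that $K_n^{(\ell)}$ contains close to the expected number of copies of every sub-hypergraph $F'\subseteq F$ and that these counts are stable under restriction to dense sub-hypergraphs. For fixed hypergraphs these properties are classical; the substantive balance is that $m(F)$ is precisely the value equating the expected counts of edges and of copies of the densest sub-hypergraph, so $q_n\gg n^{-1/m(F)}$ is exactly the scale at which the transference machinery becomes applicable. Modulo Theorem~\ref{thm:main}, the proof of Theorem~\ref{thm:Tur} is then the bookkeeping described above.
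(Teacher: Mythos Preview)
Your overall strategy matches the paper's: derive the 1-statement from Theorem~\ref{thm:main} applied to the $e(F)$-uniform hypergraph whose vertices are the edges of $K_n^{(\ell)}$ and whose hyperedges are the copies of $F$, with supersaturation (Erd\H os--Simonovits) supplying the $\pi(F)$-denseness, and handle the 0-statement by a deletion argument. For the 1-statement your outline is correct, though the hypothesis you describe loosely as ``uniform-distribution and local-counting'' is in fact the $(K,\bp)$-boundedness condition of Definition~\ref{def:Kbdd}, which is a bound on $\EE{\sum_v\deg_i^2(v,V_q)}$ rather than a stability-of-counts statement; the paper verifies it by directly counting pairs $(F_1,F_2)$ of copies of $F$ sharing a prescribed edge and using that $q\geq n^{-1/m(F)}$ forces $n^{v(J)}q^{e(J)}=\Omega(qn^{\ell})$ for every sub-hypergraph $J\subseteq F$ with an edge.

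There is, however, a genuine gap in your 0-statement. You argue only for $q_n=o\bigl(n^{-1/m(F)}\bigr)$, where the expected number of copies of $F^{\ast}$ is $o(q_n n^{\ell})$ and Markov's inequality suffices. But the theorem demands the conclusion for all $q_n\leq cn^{-1/m(F)}$ with a \emph{fixed} $c>0$, and at $q_n=cn^{-1/m(F)}$ the expected number of copies of $F^{\ast}$ is of the \emph{same} order as $e\bigl(G^{(\ell)}(n,q_n)\bigr)$. A first-moment bound then only tells you the copy count is $O(e(G))$ with probability bounded away from~$0$, not a.a.s., so your deletion step does not go through. The paper closes this gap with a second-moment argument: writing $F'$ for the sub-hypergraph maximising $e(F')/v(F')$ and $F''=F^{\ast}$, one checks that in the range $q_n\gg n^{-v(F')/e(F')}$ the variance satisfies $\var{Z_{F''}}=o\bigl(\EE{Z_{F''}}^2\bigr)$, so Chebyshev gives $Z_{F''}\leq 2\,\EE{Z_{F''}}$ a.a.s.; one then chooses $c$ small enough that $2q_n^{e(F'')}n^{v(F'')}\leq\tfrac12(1-\pi(F)-\eps)q_n n^{\ell}$. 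Without this second-moment step your 0-statement does not cover the full range $q_n\leq cn^{-1/m(F)}$.
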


In Section~\ref{sec:main_results_pf} we will deduce the 1-statements of Theorems~\ref{thm:FK},~\ref{thm:dr},~\ref{thm:Schur}, and~\ref{thm:Tur}
from the main result, Theorem~\ref{thm:main}, 
which we present in the next section. The proofs of the 0-statements 
will be more elementary and will be also given in Section~\ref{sec:main_results_pf}.

\section{Main technical result}
\label{sec:techical}
The main result will be phrased in 
the language of hypergraphs. We will study sequences of hypergraphs
$\bH=(H_n=(V_n,E_n))_{n\in\NN}$. In the context of Theorem~\ref{thm:Sz} 
one may think of $V_n=[n]$ and $E_n$ being the arithmetic progressions of length~$k$.
In the context of Theorems~\ref{thm:FK},~\ref{thm:dr}, and~\ref{thm:Schur}
the corresponding hypergraphs the reader should have in mind are defined 
in a very similar way. For Theorem~\ref{thm:Tur} one should think of $V_n=E(K_n^{(\l)})$ being the edge set of the complete hypergraph~$K_n^{(\l)}$ and
edges of $E_n$ correspond to copies of $F$ in~$K_n^{(\l)}$.

In order to transfer an extremal result from the classical, 
deterministic setting to the probabilistic setting we will require that 
a stronger quantitative version of the extremal result holds (see Definition~\ref{def:adense} below). Roughly speaking, we will require that 
a sufficiently dense sub-structure not only contains one copy of 
the special configuration (not only one arithmetic progression or not only one copy of $F$), but instead the number of those configurations should be of the same order 
as the total number of those configurations in the given underlying ground set.

\begin{definition}\label{def:adense}
	Let $\bH=(H_n)_{n\in\NN}$ be a sequence of $k$-uniform hypergraphs and $\alpha\geq 0$.
	We say $\bH$ is \textbf{$\alpha$-dense} if the following is true.
	
	For every $\eps>0$ there exist $\zeta>0$ and $n_0$ such that for every $n\geq n_0$
	and every $U\subseteq V(H_n)$ with $|U|\geq (\alpha+\eps)|V(H_n)|$ we have
	$$
	|E(H_n[U])|\geq \zeta |E(H_n)|.
	$$
\end{definition}

The second condition in Theorem~\ref{thm:main} imposes a lower bound on 
the smallest probability for which we can transfer the extremal result to the probabilistic setting (see Definition~\ref{def:Kbdd}).
For a $k$-uniform hypergraph $H=(V,E)$, $i\in[k-1]$, $v\in V$, and $U\subseteq V$ we denote by 
$\deg_i(v,U)$ the number of edges of $H$ containing $v$ and having at least $i$ vertices 
in $U\setminus\{v\}$. More precisely, 
\begin{equation}\label{eq:defdegi}
\deg_i(v,U)=\left|\{e\in E\colon |e\cap (U\setminus\{v\})|\geq i
	\tand
v\in e\}\right|\,.
\end{equation}
For $q\in(0,1)$ we let $\mu_i(H,q)$ denote the expected value of the 
sum over all such degrees squared with $U=V_q$ being the binomial random subset of $V$
$$
\mu_i(H,q)=\EE{\sum_{v\in V}\deg^2_i(v,V_q)}\,.
$$

\begin{definition}\label{def:Kbdd}
	Let $K\geq 1$, let $\bH=(H_n)_{n\in\NN}$ be a sequence of $k$-uniform hypergraphs, and
	let $\bp=(p_n)_{n\in\NN}\in(0,1)^\NN$ be a sequence of probabilities.	We say $\bH$ is \textbf{$(K,\bp)$-bounded} if the following is true.
	
	For every $i\in[k-1]$ there exists $n_0$ such that for every 
	$n\geq n_0$ and $q\geq p_n$ we have
	\begin{equation}\label{eq:Kbdd}
	\mu_i(H_n,q)\leq Kq^{2i}\frac{|E(H_n)|^2}{|V(H_n)|}\,.
	\end{equation}
\end{definition}

With those definitions at hand, we can state the main result. 

\begin{theorem}\label{thm:main}
	Let $\bH=(H_n=(V_n,E_n))_{n\in\NN}$ be a sequence of $k$-uniform hypergraphs,
	let $\bp=(p_n)_{n\in\NN}\in(0,1)^\NN$ be a sequence of probabilities
	satisfying $p_n^k|E_n|\to\infty$ as $n\to\infty$, and 
	let $\alpha\geq 0$ and $K\geq 1$.
	If $\bH$ is $\alpha$-dense and $(K,\bp)$-bounded,
	then the following holds.
	
	For every  $\delta>0$ and $(\omega_n)_{n\in\NN}$ with $\omega_n\to\infty$ as $n\to\infty$
	there exists $C\geq 1$
	such that for every $1/\omega_n>q_n\geq Cp_n$ the following holds a.a.s.\ for $V_{n,q_n}$. 
	For every  subset $W\subseteq V_{n,q_n}$ with $|W|\geq (\alpha+\delta)|V_{n,q_n}|$ we have $E(H_n[W])\neq\emptyset$.
\end{theorem}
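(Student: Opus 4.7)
The plan is a contradiction-and-union-bound argument: assume that for infinitely many $n$, with probability bounded away from zero there is a ``bad'' set $W\subseteq V_{n,q_n}$ with $|W|\geq(\alpha+\delta)|V_{n,q_n}|$ and $E(H_n[W])=\emptyset$. Fix $\eps=\delta/2$, let $\zeta>0$ be the constant supplied by the $\alpha$-dense property applied with this $\eps$, and postpone the choice of $C$ to be a function of $\alpha,\delta,K,\zeta$. A Chernoff estimate lets me condition on $|V_{n,q_n}|=(1\pm o(1))q_n|V_n|$, so a bad $W\subseteq V_n$ satisfies $|W|\geq(\alpha+\eps)q_n|V_n|$. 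A direct union bound over edge-free $W\subseteq V_n$ of this size is hopeless, since the entropy $\binom{|V_n|}{|W|}$ and the inclusion probability $q_n^{|W|}$ nearly cancel, so the crux of the argument is to encode each bad $W$ by a short \emph{certificate} $J=J(W)\subseteq W$ of size $|J|=o(|W|)$, from which $W$ can be reconstructed up to a controlled amount of ambiguity.

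The certificate will be built by a peeling procedure driven by the $(K,\bp)$-bounded hypothesis. The bound $\mu_i(H_n,q)\leq Kq^{2i}|E_n|^2/|V_n|$, together with Chebyshev, implies that for a random sample $S=V_{n,q}$ the degrees $\deg_i(v,S)$ concentrate around their common mean $\Theta(q^i|E_n|/|V_n|)$ for all but a negligible fraction of vertices $v$, simultaneously for every $i\in[k-1]$. I iterate this observation: at each round I place into $J$ those vertices of $W$ whose local degree profile in $V_{n,q_n}$ is atypical, and I argue that the remaining part of $W$ is pinned down by $J$ together with a bounded palette of edge-type choices. The accounting is that each round adds at most $|W|/\log(1/q_n)$ vertices to $J$ while shrinking the unpeeled portion of $W$ by a constant factor, so after $O(\log(1/q_n))$ rounds $|J|=o(|W|)$ as required.

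The contradiction comes from applying $\alpha$-density to a deterministic extension. Since $|W|=\Theta(q_n|V_n|)=o(|V_n|)$, $\alpha$-density cannot be used on $W$ directly; instead, pick an arbitrary extension $\widetilde W\supseteq W$ in $V_n$ with $|\widetilde W|\geq(\alpha+\eps)|V_n|$, so that $|E(H_n[\widetilde W])|\geq\zeta|E_n|$. Because $p_n^k|E_n|\to\infty$ and $q_n\geq Cp_n$, the same $(K,\bp)$-bounded hypothesis yields a second-moment estimate showing that $\Theta(\zeta q_n^k|E_n|)$ of these edges lie entirely in $V_{n,q_n}$. Each such edge must meet $V_{n,q_n}\setminus W$ (as $W$ is edge-free), and combining this count with the certificate encoding gives, for $C$ sufficiently large, an expected number of bad $W$ that tends to $0$, contradicting the assumed positive failure probability.

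The main obstacle is the peeling procedure and the associated bookkeeping: one must verify that $|J|$ is genuinely $o(|W|)$, that $W$ can be reconstructed from $J$ using only subexponential (in $|W|$) auxiliary data, and that the error probabilities across $O(\log(1/q_n))$ rounds aggregate to $o(1)$. This is where the $(K,\bp)$-bounded hypothesis is used in full: it provides second-moment control at every scale $q\geq p_n$, and this scale-invariance is what makes the iteration propagate. The freedom to choose $C$ large supplies the slack needed to absorb the constants that accumulate across the rounds, and the condition $q_n<1/\omega_n$ ensures that $q_n\to 0$ so that the asymptotic moment control is in the regime where it is informative.
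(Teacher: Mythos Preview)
The proposal has a genuine gap at its core: the certificate-and-reconstruction scheme is not a mechanism that works, and the sketch gives no reason to believe it does. You correctly diagnose that a direct union bound fails because the entropy of $W$ and its inclusion probability nearly cancel, but your proposed fix --- a short certificate $J\subseteq W$ from which $W$ can be recovered up to small ambiguity --- is unsupported. The $(K,\bp)$-bounded hypothesis controls the expectation of $\sum_v\deg_i^2(v,V_q)$; nothing in this lets you infer that the ``typical-degree'' vertices of an arbitrary edge-free set $W$ are determined by the atypical ones. There is no ``bounded palette of edge-type choices'' that pins $W$ down, precisely because $W$ is edge-free: the hypergraph imposes no internal constraints on~$W$, and there are far too many independent sets in $H_n$ of size $\Theta(q_n|V_n|)$ for an $o(|W|)$-sized fingerprint to distinguish them. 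Your closing step is also broken: the extension $\widetilde W\supseteq W$ is chosen arbitrarily, so edges of $H_n[\widetilde W]$ that survive into $V_{n,q_n}$ may lie entirely in $\widetilde W\setminus W$ and carry no information about $W$; the observation ``each such edge meets $V_{n,q_n}\setminus W$'' is true but does not combine with the certificate count to yield any contradiction.

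What the paper actually does is structurally different. It proves a strengthened inductive statement (Lemma~\ref{lem:main}): for each $i\in[k]$, with failure probability at most $2^{-bq|V_n|}$, every $W\subseteq U_q$ of the relevant density satisfies $|E^i_U(W)|\geq\xi q^i|E_n|$. The case $i=1$ follows directly from $\alpha$-density. For the step $i\to i+1$, the random set $U_q$ is exposed in $R$ main rounds, each subdivided into geometrically many subrounds; in each substantial round the induction hypothesis, combined with the upper-tail deletion lemma (Proposition~\ref{prop:Kpbdd}) extracted from $(K,\bp)$-boundedness via Cauchy--Schwarz, either already gives enough edges in $E^{i+1}_U(W)$ or produces $\Omega(|V_n|)$ new ``rich'' vertices of $U$, so that after fewer than $R$ such rounds the rich vertices exhaust $U$. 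The subround structure is exactly what beats the entropy obstacle you flagged: the failure probability in round $s$ is at most $2^{-2b^*q_s|V_n|}$ while the number of choices for $W$ restricted to earlier rounds is at most $2^{|V_n|\sum_{t<s}q_t}$, and the geometric growth $q_s\approx B q_{s-1}$ ensures the former dominates the latter. Your sketch contains no analogue of either the inductive strengthening on $i$ or the multi-round exposure, and both are essential to the argument.
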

The proof of Theorem~\ref{thm:main} is based on induction on $k$ and for the induction we will strengthen the statement
(see Lemma~\ref{lem:main} below).

For a $k$-uniform hypergraph $H=(V,E)$ subsets $W\subseteq U\subseteq V$, and  $i\in\{0,1,\dots,k\}$ we consider those edges of $H[U]$ which have at least 
$i$ vertices in $W$ and we denote this family by 
$$
E_U^i(W)=\{e\in E(H[U])\colon |e\cap W|\geq i\}\,.
$$
Note that 
\begin{equation}\label{eq:E0}
	E_U^0(W)=E(H[U])\qand
    E^k_U(W)=E(H[W])
\end{equation} 
for every $W\subseteq U$.

\begin{lemma}\label{lem:main}
	Let $\bH=(H_n=(V_n,E_n))_{n\in\NN}$ be a sequence of $k$-uniform hypergraphs,
	let $\bp=(p_n)_{n\in\NN}\in(0,1)^\NN$ be a sequence of probabilities satisfying $p_n^k|E_n|\to\infty$ as $n\to\infty$, and 
	let $\alpha\geq 0$ and $K\geq 1$.
	If $\bH$ is $\alpha$-dense and $(K,\bp)$-bounded,
	then the following holds.
	
	For every $i\in[k]$, $\delta>0$, and $(\omega_n)_{n\in\NN}$ with $\omega_n\to\infty$ as $n\to\infty$  
	there exist $\xi>0$, $b>0$, $C\geq 1$, and $n_0$
	such that for all $\beta$, $\gamma\in(0,1]$ with $\beta\gamma\geq \alpha+\delta$, 
	every $n\geq n_0$, every $q$ with $1/\omega_n\geq q\geq Cp_n$ the following holds. 
	
	If $U\subseteq V_n$ with 
	$|U|\geq \beta|V_n|$, then the binomial random subset $U_q$ satisfies with probability at least 
	$$
	1-2^{-bq |V_n|}
	$$
	the following property: For every subset $W\subseteq U_q$ with 
	$|W|\geq \gamma |U_q|$ we have 
	$$
	\left|E^i_U(W)\right|\geq \xi q^i |E_n|\,.
	$$
\end{lemma}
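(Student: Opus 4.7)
The plan is to prove Lemma~\ref{lem:main} by induction on the uniformity $k$, establishing all $i\in[k]$ at once for each fixed $k$. The base case $k=1$ is essentially a direct application of $\alpha$-density: identifying singleton edges with vertices gives $E^1_U(W) = W \cap E_n$, and since $\beta\gamma\geq\alpha+\delta$, a Chernoff estimate ensures $|W|\geq(\alpha+\delta/2)|V_n|$ with the required exponential-in-$q|V_n|$ control, at which point $\alpha$-density yields $|W\cap E_n|\geq\xi q|E_n|$.

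For the step $k-1\to k$ I would run a secondary induction on $i\in[k]$. The inner base case $i=1$ asks for a lower bound on the number of edges of $H_n[U]$ meeting every large $W\subseteq U_q$; it follows by combining $\alpha$-density of $H_n[U]$---which supplies $|E(H_n[U])|\geq\zeta|E_n|$---with a union bound over candidate ``bad'' sets $W$ weighted by $q^{|W|}$. The $(K,\bp)$-boundedness hypothesis at index $1$ controls how concentrated edges of $H_n[U]$ can be on small vertex subsets, which is precisely what caps the number of near-independent sets $W$ of the critical size.

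For the inner step $i\to i+1$ I would use the link decomposition: an edge $e\in E^{i+1}_U(W)$ containing a fixed $v\in W$ corresponds to an edge of the link $H_n^{(v)}=\{e\setminus\{v\}:v\in e\in E_n\}$, restricted to $U\setminus\{v\}$, that meets $W\setminus\{v\}$ in at least $i$ vertices. A double count of incidences gives
\[
	|E^{i+1}_U(W)|\;\geq\; \tfrac{1}{k}\sum_{v\in W}\bigl|\{e'\in H_n^{(v)}[U\setminus\{v\}]\ \colon\ |e'\cap (W\setminus\{v\})|\geq i\}\bigr|\,.
\]
Applying the outer induction hypothesis, at uniformity $k-1$ and index $i$, to the link hypergraphs then produces a summand of order $q^i|E(H_n^{(v)})|$ for typical $v\in W$; combining with $\sum_v|E(H_n^{(v)})|=k|E_n|$ and the Chernoff estimate $|W|\geq\gamma q|U|/2$ yields the desired lower bound $\xi' q^{i+1}|E_n|$.

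The main obstacle is that the link hypergraphs $H_n^{(v)}$ do not individually inherit the two structural hypotheses, so the outer inductive step cannot be invoked vertex by vertex in a naive way. One needs to show that $\alpha$-density of $\bH$ forces \emph{most} links to be $\alpha'$-dense for an appropriate $\alpha'$, which amounts to an averaging argument applied to $\alpha$-density, and that the squared-degree form of $\mu_i$ in the $(K,\bp)$-boundedness hypothesis transfers, via Cauchy--Schwarz, to an analogous bound on the link degrees. Handling the exceptional vertices and the interaction between the adversarial choice of $W$, the random exposure of $U_q$, and the choice of a ``base'' vertex $v$ will require a two-stage revelation of $U_q$ and is the technical heart of the argument.
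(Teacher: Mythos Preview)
Your induction scheme is structurally different from the paper's and, as you yourself suspect, the link-hypergraph reduction does not go through. The paper performs a single induction on~$i$ (not on~$k$); at no point does it pass to $(k-1)$-uniform links. The reason is exactly the obstacle you flag: the links $H_n^{(v)}$ need not be $\alpha$-dense, and no averaging argument repairs this, because the adversary chooses $W$ \emph{after} seeing $U_q$ and can concentrate on the exceptional vertices. More basically, the inductive hypothesis is a statement about a fixed sequence $\bH$ satisfying two global hypotheses; the links are a family indexed by $v\in V_n$, not a sequence in $n$, and there is no canonical way to feed them back into the lemma. Your Cauchy--Schwarz remark about transferring $(K,\bp)$-boundedness to links is similarly not enough: the bound $\mu_i(H_n,q)\le Kq^{2i}|E_n|^2/|V_n|$ controls an \emph{average} of squared degrees and says nothing about individual links.

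What the paper actually does for $i\to i+1$ is a multi-round exposure of $U_q$ (not two rounds, but $R=R(\xi',K,k,\delta)$ rounds, each further subdivided). In every ``substantial'' round~$s$ one applies the inductive hypothesis at level~$i$ to the \emph{same} hypergraph $H_n$, restricted to $U\setminus Z(s-1)$ where $Z(s-1)$ is the set of vertices already known to be ``rich'' (i.e.\ to have large $\deg_i(\cdot,W)$). The $(K,\bp)$-boundedness enters through an upper-tail deletion lemma (Proposition~3.5) which, combined with Cauchy--Schwarz and the level-$i$ bound, forces each substantial round to produce $\Omega_{\xi',K,k}(|V_n|)$ \emph{new} rich vertices --- or else $W$ already meets many old rich vertices and we get $|E^{i+1}_U(W)|\ge\xi q^{i+1}|E_n|$ directly. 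Since there are at least $\delta R/4$ substantial rounds and $R$ is chosen so that the rich set cannot grow that many times, the second alternative must eventually occur. The subdivision of rounds into $B^{R-s}$ subrounds is what makes the failure probability in round~$s$ beat the union bound over all choices of $W$ in earlier rounds. Note also that the base case $i=1$ uses only $\alpha$-density (a low-degree-vertex argument) and no union bound over $W$; $(K,\bp)$-boundedness plays no role there.
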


Theorem~\ref{thm:main} follows from 
Lemma~\ref{lem:main} applied with $i=k$, $\beta=1$,  
$\gamma=\alpha+\delta$, and $U=V_n$.

\subsection{Probabilistic tools}
We will use Chernoff's inequality in the following form (see, e.g.,~\cite{JLR00}*{Corollary~2.3}).
\begin{theorem}[Chernoff's inequality]
Let $X\subseteq Y$ be finite sets and $p\in(0,1]$. For every $0<\rho\leq 3/2$ we have
\[
\PP{\big||X\cap Y_p|-p|X|\big|\geq \rho p|X|}\leq 2\exp(-\rho^2p|X|/3)\,.
\eqqed
\]
\end{theorem}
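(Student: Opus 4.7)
The plan is to reduce the statement to the classical Chernoff bound for a binomial random variable and then verify the relevant elementary tail inequality. Since $X \subseteq Y$, the membership events $\{x \in Y_p\}$ for $x \in X$ are mutually independent Bernoulli$(p)$ trials, so $Z := |X \cap Y_p|$ is a sum of $N := |X|$ i.i.d.\ Bernoulli$(p)$ variables, with $\EE{Z} = \mu$ for $\mu := pN$. It suffices to prove the two one-sided tail bounds
\[
\PP{Z \geq (1+\rho)\mu} \leq \exp\!\bigl(-\rho^2\mu/3\bigr) \qquad \text{and} \qquad \PP{Z \leq (1-\rho)\mu} \leq \exp\!\bigl(-\rho^2\mu/3\bigr),
\]
and combine them via a union bound; the lower-tail event is vacuous when $\rho > 1$, so the restriction $\rho \leq 3/2$ in the statement only constrains the upper tail.

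For the upper tail I would apply the Cram\'er--Chernoff exponential moment method. For any $t > 0$, Markov's inequality applied to $e^{tZ}$ together with independence yields
\[
\PP{Z \geq (1+\rho)\mu} \leq e^{-t(1+\rho)\mu}\EE{e^{tZ}} = e^{-t(1+\rho)\mu}\bigl(1 + p(e^t - 1)\bigr)^N \leq \exp\!\bigl(\mu(e^t - 1) - t(1+\rho)\mu\bigr),
\]
where the final step uses $1+x \leq e^x$. Optimizing by choosing $t = \ln(1+\rho)$ gives $\PP{Z \geq (1+\rho)\mu} \leq \exp(-\mu h(\rho))$, where $h(\rho) := (1+\rho)\ln(1+\rho) - \rho$. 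An analogous calculation with $t < 0$, applied to the moment generating function of $-Z$, yields $\PP{Z \leq (1-\rho)\mu} \leq \exp(-\mu\,\tilde h(\rho))$ with $\tilde h(\rho) := (1-\rho)\ln(1-\rho) + \rho$, valid for $\rho \in (0,1)$.

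What remains is the main obstacle: verifying $h(\rho) \geq \rho^2/3$ on $(0, 3/2]$ and $\tilde h(\rho) \geq \rho^2/3$ on $(0,1)$. For $\tilde h$, the Taylor expansion
\[
\tilde h(\rho) = \sum_{k \geq 2} \frac{\rho^k}{k(k-1)}
\]
immediately gives $\tilde h(\rho) \geq \rho^2/2$. For $h$, I would set $g(\rho) = h(\rho) - \rho^2/3$, observe $g(0) = g'(0) = 0$ with $g'(\rho) = \ln(1+\rho) - 2\rho/3$, and compute $g''(\rho) = (1-2\rho)/(3(1+\rho))$, which is positive on $(0, 1/2)$ and negative on $(1/2, 3/2]$. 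Hence $g'$ rises from $0$ and then falls, crossing zero exactly once at some $\rho^{\star} \in (1/2, 3/2)$; consequently $g$ itself increases on $(0, \rho^{\star})$ and decreases on $(\rho^{\star}, 3/2]$. The minimum of $g$ on $[0, 3/2]$ is therefore attained at an endpoint, and since $g(0) = 0$ together with the direct numerical check $g(3/2) = \tfrac{5}{2}\ln(5/2) - \tfrac{9}{4} > 0$, we conclude $g \geq 0$ throughout. Combining the two one-sided bounds via a union bound yields the factor of $2$ and completes the proof.
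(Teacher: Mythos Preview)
Your proof is correct. The paper does not actually supply a proof of this statement: it is quoted as a standard tool with a reference to \cite{JLR00}*{Corollary~2.3} and closed immediately with a \qed. Your write-up is therefore strictly more than what the paper does---you give a self-contained derivation via the Cram\'er--Chernoff moment method and the elementary verification that $(1+\rho)\ln(1+\rho)-\rho\geq \rho^2/3$ on $(0,3/2]$ and $(1-\rho)\ln(1-\rho)+\rho\geq \rho^2/3$ on $(0,1)$, whereas the paper simply treats the inequality as known.
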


We also use an approximate concentration result for $(K,\bp)$-bounded hypergraphs.
The $(K,\bp)$-boundedness only bounds the expected value of the quantity $\sum_v\deg_i^2(v,V_p)$.
In the proof of Lemma~\ref{lem:main} we need an exponential upper tail bound 
and, unfortunately, it is known that such bounds usually not exist. However, it was shown by 
R\"odl and Ruci\'nski in~\cite{RR95} that at the cost of deleting a few elements 
such bound can be obtained. We will again apply this idea in the proof of Lemma~\ref{lem:main}.

\begin{proposition}[{Upper tail~\cite{RR95}*{Lemma~4}}]\label{prop:Kpbdd}
	Let $\bH=(H_n=(V_n,E_n))_{n\in\NN}$ be a sequence of $k$-uniform hypergraphs,
	let $\bp=(p_n)_{n\in\NN}\in(0,1)^\NN$ be a sequence of probabilities, and 
	let $K\geq 1$.
	If $\bH$ is $(K,\bp)$-bounded, then 
	the following holds.
	
	For every $i\in[k-1]$ and every $\eta>0$ there exist $b>0$ and $n_0$ such that for every 
	$n\geq n_0$ and every $q\geq p_n$ the binomial random subset 
	$V_{n,q}$  has the following property
	with probability at least $1-2^{-bq|V_n|+1+\log_2k}$.
	There exists a set $X\subseteq V_{n,q}$ with $|X|\leq \eta q|V_{n}|$ such that
	$$
	\sum_{v\in V_n}\deg^2_i(v,V_{n,q}\setminus X)\leq 4^kk^2Kq^{2i}\frac{|E_n|^2}{|V_n|}\,.
	$$
\end{proposition}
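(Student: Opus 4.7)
I would follow the deletion argument of Rödl and Ruciński. Write $Y := \sum_{v\in V_n}\deg_i^2(v, V_{n,q})$ and $\mu := Kq^{2i}|E_n|^2/|V_n|$; the $(K,\bp)$-boundedness hypothesis gives $\EE{Y}\leq \mu$, but a plain Markov bound only yields polynomial concentration of $Y$. The exponential tail $2^{-bq|V_n|}$ will be purchased by allowing a small excision $X\subseteq V_{n,q}$: any atypical contribution to $Y$ is carried by a sparse set of ``heavy'' vertices of $V_{n,q}$, whose removal tightens the sum to the claimed $4^kk^2Kq^{2i}|E_n|^2/|V_n|$.

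The first step is a combinatorial expansion. Replace each indicator $\mathds{1}[|e\cap(V_{n,q}\setminus\{v\})|\geq i]$ by the overcount $\sum_{S\in\binom{e\setminus\{v\}}{i}}\mathds{1}[S\subseteq V_{n,q}]$, then square and sum to obtain $Y\leq \sum_{t=i}^{2i} Y_t$, where $Y_t$ counts $t$-element witness sets $T = S_1\cup S_2\subseteq V_{n,q}$ weighted by a structural multiplicity $w_t(T)$ (the number of tuples $(v,e_1,e_2,S_1,S_2)$ that produce~$T$). The total weight at each level is bounded by a constant of order $4^kk^2$, and by monotonicity $\EE{Y_t}\leq\EE{Y}\leq\mu$ for every $t$.

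For the deletion proper, fix for each $t\in\{i,\ldots,2i\}$ a threshold $M_t$ of order $\mu/(\eta q|V_n|)$ and call $v\in V_n$ \emph{$t$-heavy} when the potential $v$-portion $Z_v^{(t)}:=\sum_{T\ni v}w_t(T)\mathds{1}[T\setminus\{v\}\subseteq V_{n,q}]$ exceeds $M_t$. Set $X := \{v\in V_{n,q}: v\text{ is $t$-heavy for some $t$}\}$. By monotonicity (removing vertices from $V_{n,q}$ can only shrink any $v$-portion) every $v\in V_{n,q}\setminus X$ contributes at most $M_t$ to $Y_t$ restricted to $V_{n,q}\setminus X$; summing over $v$ and $t$ and inserting the choice of $M_t$ yields $\sum_v\deg_i^2(v,V_{n,q}\setminus X)\leq 4^kk^2Kq^{2i}|E_n|^2/|V_n|$. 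To bound $|X|$ with the required tail, Markov gives $\EE{\#\{v\text{ }t\text{-heavy}\}}\leq 2i\mu/(qM_t)=O(\eta|V_n|)$, and then a decoupling argument --- using that $\mathds{1}[v\text{ heavy}]$ depends only on $V_{n,q}\setminus\{v\}$ while $\mathds{1}[v\in V_{n,q}]$ depends only on coordinate $v$ --- reduces $|X|$ to a sum of conditionally independent Bernoullis, and Chernoff yields $\PP{|X|>\eta q|V_n|}\leq 2^{-bq|V_n|+1+\log_2 k}$; the $+\log_2 k$ absorbs the union bound over the $i+1\leq k$ values of $t$.

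The main obstacle is this last concentration step. The heaviness indicators $\mathds{1}[v\text{ heavy}]$ are strongly correlated across different $v$ since they all depend on the same random set $V_{n,q}$, so standard Chernoff does not apply to $|X| = \sum_v \mathds{1}[v\text{ heavy}]\cdot\mathds{1}[v\in V_{n,q}]$ directly. The key Rödl--Ruciński insight is the coordinate asymmetry exploited above: for each fixed $v$, the two factors depend on disjoint coordinates and are therefore independent, so an appropriate two-phase exposure of $V_{n,q}$ (first the coordinates away from $v$, then coordinate $v$ itself) lets one condition on the heaviness pattern and then apply Chernoff to the remaining independent inclusion bits. The monotonicity observation that deletion cannot create new heavy vertices makes the delete-then-check procedure self-consistent, and the combinatorial constants combine routinely to the claimed $4^kk^2K$.
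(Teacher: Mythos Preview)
Your plan diverges from the paper's actual argument and, as written, has two gaps.

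\textbf{What the paper does.} The R\"odl--Ruci\'nski deletion method is \emph{not} a heavy-vertex-plus-Chernoff argument. Instead one fixes $j\in\{i,\dots,2(k-1)\}$, lets $\cS_j$ count tuples $(S,v,e,e')$ with $|S|=j$, $S\subseteq (e\cup e')\setminus\{v\}$ and $S\subseteq V_{n,q}$, and then considers the random variable $Z_j$ counting \emph{sequences of $z=\Theta(\eta q|V_n|)$ pairwise disjoint} such witness sets $S_1,\dots,S_z\subseteq V_{n,q}$. Disjointness makes the events $\{S_r\subseteq V_{n,q}\}$ independent, so $\EE{Z_j}\le(\EE{\cS_j})^z\le(4^{k-1}\mu)^z$. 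The key combinatorial observation is that if \emph{every} deletion of size $\le\eta q|V_n|$ leaves the degree-square sum above $4^kk^2\mu$, then one can greedily extract $z$ disjoint witnesses for some $j_0$, forcing $Z_{j_0}\ge(2\cdot4^{k-1}\mu)^z$; Markov then gives the bound $2^{-z}$. The exponential tail is purchased by this high-moment trick, not by Chernoff.

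\textbf{First gap: the threshold $M_t$ cannot be chosen consistently.} With $M_t$ of order $\mu/(\eta q|V_n|)$, your post-deletion bound reads
\[
\sum_t\frac{1}{t}\,|V_{n,q}\setminus X|\cdot M_t \;\approx\; \sum_t \frac{q|V_n|}{t}\cdot\frac{\mu}{\eta q|V_n|}\;=\;\Theta\!\Big(\frac{\mu}{\eta}\Big),
\]
not $4^kk^2\mu$. If instead you take $M_t$ of order $\mu/(q|V_n|)$ to recover a constant independent of $\eta$, then Markov only gives $\EE{\#\{v\ t\text{-heavy}\}}=O(|V_n|)$ and hence $\EE{|X|}=O(q|V_n|)$, not $O(\eta q|V_n|)$. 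There is no choice of $M_t$ that meets both constraints with the stated constant.

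\textbf{Second gap: the decoupling step.} Writing $|X|=\sum_v H_v I_v$ with $H_v$ measurable with respect to $(I_u)_{u\neq v}$ does give $\EE{H_vI_v}=q\,\EE{H_v}$ for each fixed $v$, but it does \emph{not} let you ``condition on the heaviness pattern and then apply Chernoff''. The pattern $(H_v)_{v\in V_n}$ is a function of the full vector $(I_u)_{u\in V_n}$, so after conditioning on it the inclusion bits are no longer independent Bernoulli$(q)$. No two-phase exposure separates the two cleanly, because the heaviness of different vertices involves overlapping coordinate sets. This is precisely why R\"odl and Ruci\'nski avoid heavy vertices altogether and go through disjoint witness sequences, where independence is genuine and Markov on $Z_j$ directly yields the $2^{-\Theta(\eta q|V_n|)}$ tail.
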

The proof follows the lines of~\cite{RR95}*{Lemma~4} and we include it for completeness.
\begin{proof}
	Suppose $\bH$ is $(K,\bp)$-bounded and $i\in[k-1]$ and $\eta>0$ are given.
	We set 
	$$
	b=\frac{\eta}{4(k-1)^2}
	$$
	and $n_0$ be sufficiently large, so that~\eqref{eq:Kbdd} holds for every 
	$n\geq n_0$ and $q\geq p_n$.
	
	For every $j=i,\dots,2(k-1)$ we consider the family  $\sS_j$ defined as follows
	\begin{multline*}
	\sS_j=\Big\{(S,v,e,e')\colon S \subseteq V_n,\, v\in V_n,\, e,e'\in E_n\ \text{such that}\ |S|=j,\\
		v\in e\cap e',\ S\subseteq (e\cup e')\setminus \{v\},\ |e\cap S|\geq i\tand |e'\cap S|\geq i\Big\}\,.
	\end{multline*}
					Let $\cS_{j}$ be the random variable denoting the number of elements $(S,v,e,e')$ from $\sS_j$ with  $S\in \binom{V_{n,q}}{j}$.
	By definition we have 
	$\sum_{j=i}^{2k-2}\EE{\cS_j}\leq 4^{k-1}\mu_i(H_n,q)$ and due to the $(K,\bp)$-boundedness of $\bH$ we have
	$$
	\max_{j=i,\dots,2(k-1)}\EE{\cS_j}\leq \sum_{j=i}^{2k-2}\EE{\cS_j}\leq4^{k-1}\mu_i(H_n,q)\leq 4^{k-1}Kq^{2i}\frac{|E_n|^2}{|V_n|}\,.
	$$
	Let $Z_j$ be the random variable denoting the number  of sequences 
	$$
	((S_{r},v_r,e_r,e'_r))_{r\in [z]}\in \sS_j^z
	$$ 
	of length 
	$$
	z=\left\lceil \frac{\eta q |V_n|}{4(k-1)^2}\right\rceil
	\leq \left \lceil \frac{\eta q |V_n|}{2(k-1)j}\right\rceil
	$$
	which satisfy
	\begin{itemize}
		\item[\iti{i}] the sets  $S_r$ are contained in $V_{n,q}$ and
		\item[\iti{ii}] the sets $S_r$ are mutually disjoint, i.e., $S_{r_1}\cap S_{r_2}=\emptyset$ for all $1\leq r_1<r_2\leq z$.
	\end{itemize}
	Clearly,  we have 
	$$
	\EE{Z_j}\leq |\sS_j|^zq^{jz}=  \left(\EE{\cS_j}\right)^z\leq \left(4^{k-1}Kq^{2i}\frac{|E_n|^2}{|V_n|}\right)^z\,.
	$$
	On the other hand,  if 
	$$
	\sum_{v\in V_n}\deg_i^2(v,V_{n,q}\setminus X)\geq 4^kk^2Kq^{2i}\frac{|E_n|^2}{|V_n|}\geq \sum_{j=i}^{2k-2}j\cdot 2\cdot 4^{k-1}Kq^{2i}\frac{|E_n|^2}{|V_n|}
	$$
	for any $X\subseteq V_{n,q}$ with 
	$|X|\leq \eta q |V_n|$, then  
	there exists some $j_0\in\{i,\dots, 2k-2\}$ such that
	$$
	Z_{j_0}\geq \left(2\cdot 4^{k-1}Kq^{2i}\frac{|E_n|^2}{|V_n|}\right)^z\,.
	$$
	Markov's inequality bounds the probability of this event by
	\begin{multline*}
	\PP{\exists j_0\in\{i,\dots,2k-2\}\colon Z_{j_0}\geq 2^z\left(4^{k-1}Kq^{2i}\frac{|E_n|^2}{|V_n|}\right)^z}\\
	\leq
	\sum_{j=i}^{2k-2}\PP{Z_{j}\geq 2^z\left(4^{k-1}Kq^{2i}\frac{|E_n|^2}{|V_n|}\right)^z}
	\leq 2k\cdot 2^{-z}\leq  2^{-b q |V_n|+1+\log_2k}\,,
	\end{multline*}
	which concludes the proof of Proposition~\ref{prop:Kpbdd}.
\end{proof}

\subsection{Proof of Lemma~\ref{lem:main}}
\label{sec:lem_main}
Let $\bH=(H_n=(V_n,E_n))_{n\in\NN}$ be a sequence of $k$-uniform hypergraphs,
let $\bp=(p_n)_{n\in\NN}\in(0,1)^\NN$ be a sequence of probabilities, and 
let $\alpha\geq 0$ and $K\geq 1$ such that $\bH$ is $\alpha$-dense and $(K,\bp)$-bounded.
We prove Lemma~\ref{lem:main} by induction on~$i$.

\paragraph{Induction start ($i=1$)} For  $\delta>0$ and $(\omega_n)_{n\in\NN}$ 
(which plays no role for the induction start) 
we appeal to the $\alpha$-denseness of $\bH$
and let~$\zeta$ and $n_1$ be the constants given by this property for $\eps=\delta/8$. We set
$$
\xi=\frac{\delta\zeta}{8k}\,,\quad 
b=\frac{\delta^3}{193}\,,\quad 
C=1\,,\qand 
n_0=n_1\,.
$$
Let $\beta$, $\gamma\in(0,1]$ satisfy $\beta\gamma\geq \alpha+\delta$, let $n\geq n_0$
be sufficiently large, $q\geq p_n$,
and let $U\subseteq V_n$ with $|U|\geq\beta |V_n|$ be given.
We consider the set $Y\subseteq U$ defined by
$$
Y=\left\{u\in U\colon \left|\{e\in E(H_n[U])\colon u\in e\}\right| \leq \frac{\zeta|E_n|}{2|V_n|} \right\}\,.
$$
In other words, $Y$ is the set of vertices in $U$ with low degree in $H_n[U]$. Due to the $\alpha$-denseness of $\bH$
we have
$$
|Y|\leq \left(\alpha+\frac{\delta}{8}\right)|V_n|\,.
$$
It follows from Chernoff's inequality that with probability at least 
$$
1-2\exp(-\delta^2q|U|/48)-2\exp(-\delta^2q|V_n|/192)\geq 1-2^{-bq|V_n|}
$$
we have 
$$
|U_q|\geq \left(1-\frac{\delta}{4}\right)q|U|
\qand
|U_q\cap Y|\leq \left(\alpha+\frac{\delta}{4}\right)q|V_n|\,.
$$
Consequently, for every $W\subseteq U_q$ satisfying $|W|\geq \gamma |U_q|$
we have
\begin{align*}
|W|
&\geq \gamma |U_q|
\geq \left(1-\frac{\delta}{4}\right)\gamma q|U|
\geq \left(1-\frac{\delta}{4}\right)\beta\gamma q|V_n|\\
&\geq \left(1-\frac{\delta}{4}\right)\left(\alpha+\delta\right)q|V_n|
\geq \left(\alpha+\frac{\delta}{2}\right)q|V_n|
\geq |U_q\cap Y|+\frac{\delta}{4}q|V_n|
\end{align*}
and the definition of $Y$ yields
$$
\big|E_U^1(W)\big|
\geq 
|W\setminus Y|\cdot\frac{1}{k}\frac{\zeta|E_n|}{2|V_n|}
\geq 
\frac{\delta}{4}q|V_n|\cdot\frac{1}{k}\frac{\zeta|E_n|}{2|V_n|}
=
\xi q|E_n|\,.
$$
This concludes the proof of the induction start.

\paragraph{Induction step ($i\longrightarrow i+1$)}
Let $i\geq 1$, $\delta>0$, and $(\omega_n)_{n\in\NN}$ with $\omega_n\to \infty$ as $n\to\infty$ be given. 
We will expose the random set $U_{q}$ in several rounds.
The number of ``main'' rounds~$R$ will depend on the constant $\xi(i,\delta/8)$, which is given by the induction 
assumption. 
More precisely, let
$$
\xi'=\xi(i,\delta/8)\,,\quad
b'=b(i,\delta/8)\,,\quad
C'=C(i,\delta/8)\,,\qand
n'=n_0(i,\delta/8)
$$
be given by the induction assumption applied with $\delta'=\delta/8$.
We set
\begin{equation}\label{eq:R}
R=\left\lceil \frac{4^{k+2}k^2K}{\delta(\xi')^2}+1\right\rceil\,.
\end{equation}

\paragraph{Overview}
Roughly, speaking our argument is as follows.
We will expose $U_q$ in~$R$ main rounds of the same weight, i.e., we will chose 
$q_R$ in such a way that 
$(1-q)=(1-q_R)^R$ and we let $U_q=U^{1}_{q_R}\cup\dots \cup U^{R}_{q_R}$.
Since, every subset $W$, which we have to consider, contains at least $\gamma\geq\alpha+\delta$ proportion of the elements  
of $U_q$ there must be at least $\delta R/4$ rounds such that 
$|U^s_{q_R}\cap W|\geq (\alpha+\delta/2) |U_{q_R}|$. For those rounds we will appeal 
to the induction assumption, which combined with Proposition~\ref{prop:Kpbdd}, implies that~$U$ contains at least 
$\Omega((\xi')^2|V_n|)$ elements $u\in U$ with the property that every such $u$ 
completes ``many'' elements in $E^{i}_U(W\cap U^s_{q_R})$ to elements in $E^{i+1}_U(W\cap U^s_{q_R})$.
 Moreover, 
in each of these ``substantial'' rounds $(\xi')^2|V_n|/(4^{k+1}k^2K)$ new  ``rich'' elements $u$ will be created. 
Consequently, after at most $\delta R/4-1$ of these substantial rounds all but, say,
at most $(\alpha+\delta/8) |V_n|< \gamma|V_n|$ elements of $U$ are rich and in the final substantial round 
$W\cap U_{q_R}$ must contain many rich $u\in U$and therefore create
many elements from $E^{i+1}_U(W)$.

However, the error probabilities in the later rounds will have to beat 
the number of choices for the elements of $W$ in the earlier rounds. For that we will split the 
earlier main rounds into several subrounds. This does not affect 
the argument indicated above, since our bound on 
the number of ``rich'' elements will be independent of~$q_R$. 
We now continue with the details of this proof.

\paragraph{Constants}
Set
\begin{equation}\label{eq:eta}
\eta=\frac{\delta^2}{16}
\end{equation}
and let $\bdel$ and $\ndel$ be given by Proposition~\ref{prop:Kpbdd}
applied with $i$ and $\eta$.
We set 
\begin{equation}\label{eq:b*}
b^*=\min\left\{\frac{\delta^4}{10^6}\,,\frac{b'}{3}\,,\frac{\bdel}{3}\right\}
\qand
B=\left\lceil 1+\frac{1.01^2}{b^*}\right\rceil\,.
\end{equation}
Finally, let 
\begin{align}
\xi&=\frac{\xi'\delta^2}{18k(RB^{R-1})^{i+1}}\,,\label{eq:xi}\\
b&=\min\left\{\frac{\delta^3}{60001RB^{R-1}}\,,\frac{b^*}{2RB^{R-1}}\right\}\,,\label{eq:b}\\
C&=RB^{R-1}C'\,,\label{eq:C}
\end{align}
and let 
$n_0\geq \max\{n',\ndel\}$ be sufficiently large. 

Suppose  $\beta$ and $\gamma\in(0,1]$ satisfy
$$
\beta\gamma\geq \alpha+\delta\,.
$$ 
Let 
$n\geq n_0$ and let $q$ satisfy $1/\omega_n\geq q\geq Cp_n$.
Moreover, let $U\subseteq V_n$ be such that $|U|\geq \beta |V_n|$. Note that 
$$
\min\{\beta,\gamma\}\geq \alpha+\delta\geq \delta>0 \qqand |U|\geq (\alpha+\delta)|V_n|\,.
$$

For a simpler notation from now on we suppress the subscript $n$ in $p_n$,
$H_n$, $V_n$ and $E_n$. 

\paragraph{Details of the induction step}
As discussed above we generate the random set $U_q$ in several rounds.
We will have $R$ main rounds and for that we choose $q_R$ such that
$$
1-q=(1-q_R)^R\,.
$$
For $s\in[R]$ we will further split the $s$th  main round into 
$B^{R-s}$ subrounds. For $s\in[R]$ we set 
$$
r_s=B^{R-s}
$$
and let $q_s$ satisfy
$$
(1-q_R)=(1-q_s)^{r_s}\,.
$$
Note that for sufficiently large $n$, due to $q_n\leq 1/\omega_n$ and $\omega_n\to\infty$ we have
\begin{equation}
\label{eq:lbq}
\left(1+\frac{\delta}{100}\right)\frac{q}{R}\geq q_R\geq \frac{q}{R}
\qand
\left(1+\frac{\delta}{100}\right)\frac{q_R}{B^{R-s}}\geq q_s
\geq \frac{q_R}{B^{R-s}}\,,
\end{equation}
and due to the choice of $B$ we have
\begin{equation}\label{eq:lbq2}
\sum_{t=1}^{s-1}q_t\leq 1.01\frac{q_R}{B^R}\sum_{t=1}^{s-1}B^t
\overset{\eqref{eq:b*}}{\leq}
\frac{b^*}{1.01}\frac{q_R}{B^{R}}B^s\leq \frac{b^*}{1.01}q_s\,.
\end{equation}

We proceed as follows we first 
consider $r_1$ rounds with probability $q_1$, which all together 
establish the first main round and we denote the random subsets obtained by
$$
U_{q_R}^1=U_{q_1}^{1,1}\cup\dots\cup U_{q_1}^{1,r_1}\,.
$$
This is followed by $r_2$ rounds with probability $q_2$ establishing the second main round.
This way we have
$$
U_q=U^1_{q_R}\cup\dots\cup U^R_{q_R}
$$
and for all $s\in[R]$
$$
U^s_{q_R}=U^{s,1}_{q_s}\cup\dots\cup U_{q_s}^{s,r_s}\,.
$$
Furthermore, let $W\subseteq U_q$ with $|W|\geq \gamma|U_q|$ and let 
$$
W^{s}=W\cap U^s_{q_R}
\qand
W^{s,j}=W\cap U^{s,j}_{q_s}
$$
for all $s\in[R]$ and $j\in[r_s]$. 

In our analysis we focus on ``substantial'' rounds.
For that let $S\subseteq [R]$ be the set defined by $s\in S$ if and only if
$$
|W^s|\geq\left(\gamma-\frac{\delta}{2}\right)|U^s_{q_R}|\,.
$$
By definition of $S$, for every $s\in S$ exists some $j_s\in [r_s]$ such that 
$$
|W^{s,j_s}|\geq\left(\gamma-\frac{\delta}{2}\right)|U^{s,j_s}_{q_s}|
$$
and for the rest of the proof we fix such an $j_s$  for every $s\in S$.
The following claim is a direct consequence of Chernoff's inequality.
\begin{claim}\label{claim:1}
Let $\cA$ denote the event that $|S|\geq \delta R/4$. Then 
$\PP{\cA}\geq 1-2^{-2bq|V|}$.
\end{claim}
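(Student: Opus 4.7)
The plan is to establish Claim~\ref{claim:1} by a Chernoff-plus-pigeonhole argument. First I would apply Chernoff's inequality to obtain, simultaneously, $|U_q|\geq (1-\delta/100)q|U|$ and $|U^s_{q_R}|\leq (1+\delta/100)q_R|U|$ for every $s\in[R]$. Since $\beta\gamma\geq \alpha+\delta$ and $\gamma\leq 1$ force $|U|\geq \beta|V|\geq \delta|V|$, and since $q_R\geq q/R$ by~\eqref{eq:lbq}, each deviation has failure probability at most $2\exp(-c\delta^3 q|V|/R)$ for a small absolute constant $c$. A union bound over these $R+1$ events, combined with the choice of $b$ in~\eqref{eq:b} (which is far smaller than $\delta^3/R$), ensures that this size-control event holds with probability at least $1-2^{-2bq|V|}$ for large $n$.

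Next I would argue deterministically on that good event. Since $W\subseteq U_q=\bigcup_{s\in[R]}U^s_{q_R}$, every element of $W$ lies in at least one $W^s$, so $|W|\leq \sum_{s\in[R]}|W^s|$. Bounding $|W^s|\leq |U^s_{q_R}|$ for $s\in S$ and $|W^s|<(\gamma-\delta/2)|U^s_{q_R}|$ for $s\notin S$, and substituting the size estimates, gives
\[
\gamma\bigl(1-\tfrac{\delta}{100}\bigr)q|U|\;\leq\;\gamma|U_q|\;\leq\;\bigl(1+\tfrac{\delta}{100}\bigr)q_R|U|\bigl[|S|+(R-|S|)(\gamma-\tfrac{\delta}{2})\bigr].
\]
Using $q/q_R\geq R/(1+\delta/100)$ from~\eqref{eq:lbq} and rearranging yields
\[
|S|\bigl(1-\gamma+\tfrac{\delta}{2}\bigr)\;\geq\; R\Bigl[\tfrac{\delta}{2}-\gamma\bigl(1-\tfrac{1-\delta/100}{(1+\delta/100)^2}\bigr)\Bigr]\;\geq\;R\bigl(\tfrac{\delta}{2}-\tfrac{\delta}{25}\bigr)\;\geq\;\tfrac{R\delta}{3},
\]
since $1-\frac{1-\delta/100}{(1+\delta/100)^2}\leq \delta/25$ for $\delta\in(0,1]$ and $\gamma\leq 1$. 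Because $\gamma\geq\delta\geq\delta/2$ gives $1-\gamma+\delta/2\leq 1$, we conclude $|S|\geq R\delta/3>R\delta/4$, as required.

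The only potential issue is bookkeeping: the two Chernoff factors and the slack between $q$ and $Rq_R$ must all be kept much smaller than the pigeonhole margin $\delta/2$ that separates the two regimes. The factor $1/100$ built into~\eqref{eq:lbq} and used in the Chernoff deviations is precisely what makes the right-hand side above remain of order $R\delta$ for every $\delta\in(0,1)$; everything else is elementary arithmetic using $\beta,\gamma\in[\delta,1]$.
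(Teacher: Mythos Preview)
Your argument is correct and follows essentially the same line as the paper: Chernoff's inequality to control the sizes of the rounds, followed by a counting (pigeonhole) argument using $|W|\le\sum_s|W^s|$ together with the dichotomy defining~$S$. The only cosmetic difference is that you apply Chernoff directly to $|U_q|$ and to the $R$ main rounds $|U^s_{q_R}|$, whereas the paper states the concentration for all subrounds $|U^{s,j}_{q_s}|$; for Claim~\ref{claim:1} your version is in fact the more economical choice, and the final arithmetic leading to $|S|\ge \delta R/3>\delta R/4$ is the same in spirit as the paper's displayed estimate.
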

\begin{proof}
Due to Chernoff's inequality we have 
\begin{equation}\label{eq:ch1}
|U^{s,j}_{q_s}|=(1\pm 0.01\delta) q_{s} |U|\,.
\end{equation}
for all $s\in[R]$ and every $j\in[r_s]$ 
with probability at least
$$
1-2\sum_{s=1}^R r_s\exp(-\delta^2q_s|U|/30000)
\geq
1-2^{-2bq|V|}\,,
$$
where we used 
$q_1\leq q_s$,~\eqref{eq:lbq}, the choice of $b$ in~\eqref{eq:b} 
and the fact that $n$ is sufficiently large for the last inequality.
Since $|W|\geq \gamma |U_q|$ we have
\[
|S|
\geq \frac{|W|-R\cdot (1+\delta/100)(\gamma-\delta/2)q_R|U|}{(1+\delta/100) q_R |U|}
\geq \frac{(1-\delta/100)\gamma q}{(1+\delta/100) q_R}-\left(\gamma-\frac{\delta}{2}\right)R 
\overset{\eqref{eq:lbq}}{\geq} \frac{\delta}{4}R
\]
with probability at least $1-2^{-2bq|V|}$.
\end{proof}

For the rest of the proof we analyze the rounds indexed by $(s,j_s)$ for $s\in S$.
For $s\in S$ we set
$$
W(s)=\bigcup_{\subalign{t&\in S\\t&\leq s}} W^{t,j_t}
\qqand
U(s)=\bigcup_{\subalign{t&\in S\\t&\leq s}} U^{t,j_t}_{q_t}\,.
$$ 
Note that $W(t)=U(t)=\emptyset$ for all $t<\min_{s\in S}s$.
Roughly speaking, we will show for every $s\in S$ that 
either $E^{i+1}_U(W(s))$ is sufficiently large 
or
$\Omega(|V|)$ new ``rich''
elements in~$U$ will be created. 
More precisely, for $s\in S$ we consider the following subset $Z^s\subseteq U$ of rich elements
$$
Z^s:=\left\{u\in U\colon \deg_{i}(u, W^{s,j_s},U)\geq \frac{\xi'}{2}q_s^{i}\frac{|E|}{|V|}\right\}\,,
$$
where
\begin{multline}\label{eq:deg}
\deg_{i}(u, W^{s,j_s},U):=\big|\big\{e\in E\colon |e\cap (W^{s,j_s}\setminus\{u\})|\geq i, u\in e, \tand
e\subseteq U\big\}\big|\,.
\end{multline}
Note that 
$
\deg_{i}(u, W^{s,j_s},V)=\deg_{i}(u, W^{s,j_s})
$
and, hence, for every set $U\subseteq V$ and every $u\in V$ we have
\begin{equation}\label{eq:deg2}
\deg_{i}(u, W^{s,j_s},U)\leq\deg_{i}(u, W^{s,j_s})\,.
\end{equation}
Similarly, as above we set 
$$
Z(s)=\bigcup_{\subalign{t&\in S\\t&\leq s}} Z^s
$$
\begin{claim}\label{claim:main}
For every $s\in S$ and any choice of $W(s-1)\subseteq U(s-1)$ let 
$\cB_{W(s-1)}$ denote the event that 
$U^{s,j_s}_{q_s}$ satisfies the following properties:
\begin{itemize}
\item[\iti{i}] $|U^{s,j_s}_{q_s}|\leq 1.01q_s|U|$ and
\item[\iti{ii}] for every $W^{s,j_s}$ with $|W^{s,j_s}|\geq (\gamma-\delta/2)|U^{s,j_s}_{q_s}|$
either 
\begin{equation}\label{eq:case1}
|E^{i+1}_U(W(s))|
\geq
\xi q^{i+1}|E|
\end{equation}
or  
\begin{equation}\label{eq:case2}
|Z(s)\setminus Z(s-1)|\geq \frac{(\xi')^2}{4^{k+1}k^2 K}|V|\,.
\end{equation}
\end{itemize}
Then 
$$
\PP{\cB_{W(s-1)}\mid U(s-1)}\geq 1-2^{-2b^*q_{s}|V|}\,,
$$
where $\PP{\cB_{W(s_0-1)}\mid U(s_0-1)}=\PP{\cB_{W(s_0-1)}}$ for $s_0=\min_{s\in S}s$. 
\end{claim}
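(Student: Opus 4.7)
The plan is to prove Claim~\ref{claim:main} by conditioning on $U(s-1)$ (which fixes $W(s-1)$ and $Z(s-1)$) and then imposing a few high-probability events on the freshly exposed random set $U^{s,j_s}_{q_s}$. Part~(i) is a direct Chernoff bound on $|U^{s,j_s}_{q_s}|$, and its exceptional probability $\exp(-\Omega(q_s|U|))$ is easily absorbed into the budget $2^{-2b^* q_s|V|}$. For part~(ii), the two key auxiliary events to impose on $U^{s,j_s}_{q_s}$ are: (a) the upper-tail estimate of Proposition~\ref{prop:Kpbdd} applied with $\eta=\delta^2/16$, yielding an exceptional set $X\subseteq U^{s,j_s}_{q_s}$ of size at most $\eta q_s|V|$ with $\sum_v \deg_i^2(v,U^{s,j_s}_{q_s}\setminus X)\leq 4^k k^2 K q_s^{2i}|E|^2/|V|$; and (b) the induction hypothesis at level $i$ applied with $\delta'=\delta/8$, $\beta'=\beta$, $\gamma'=\gamma-\tfrac{5\delta}{8}$ (legitimate since $\beta\gamma'\geq\alpha+\tfrac{3\delta}{8}\geq\alpha+\delta'$), guaranteeing $|E^i_U(W')|\geq\xi' q_s^i|E|$ for every $W'\subseteq U^{s,j_s}_{q_s}$ with $|W'|\geq\gamma'|U^{s,j_s}_{q_s}|$. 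The choice of $b^*$ in~\eqref{eq:b*} ensures that these events, together with the Chernoff bound for (i), all hold simultaneously with probability at least $1-2^{-2b^*q_s|V|}$.

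Conditioned on these events, fix any valid $W^{s,j_s}$ and set $W^{s,j_s}'=W^{s,j_s}\setminus X$. Since $|X|\leq\eta q_s|V|$ and $\eta/\beta\leq\delta/16$ (using $\beta\geq\alpha+\delta\geq\delta$), the trimmed set $W^{s,j_s}'$ still satisfies $|W^{s,j_s}'|\geq\gamma'|U^{s,j_s}_{q_s}|$, so the induction hypothesis yields $|E^i_U(W^{s,j_s}')|\geq\xi' q_s^i|E|$. The standard double count
\[
\sum_{u\in U}\deg_i(u,W^{s,j_s}',U) \;=\; (k-i)|E^i_U(W^{s,j_s}')|+i|E^{i+1}_U(W^{s,j_s}')| \;\geq\; \xi' q_s^i|E|,
\]
combined with the observation that $u\notin Z^s$ implies $\deg_i(u,W^{s,j_s}',U)\leq\deg_i(u,W^{s,j_s},U)<\tfrac{\xi'}{2}q_s^i|E|/|V|$, leaves $\sum_{u\in Z^s}\deg_i(u,W^{s,j_s}',U)\geq\tfrac{\xi'}{2}q_s^i|E|$. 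Cauchy--Schwarz together with the degree-square bound from Proposition~\ref{prop:Kpbdd} then produces the central estimate
\[
|Z^s| \;\geq\; \frac{((\xi'/2)q_s^i|E|)^2}{4^k k^2 K q_s^{2i}|E|^2/|V|} \;=\; \frac{(\xi')^2}{4^{k+1}k^2 K}|V|.
\]

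With this lower bound on $|Z^s|$ in hand, the dichotomy of~(ii) splits cleanly: if already $|Z^s\setminus Z(s-1)|\geq (\xi')^2|V|/(4^{k+1}k^2 K)$, then case~\eqref{eq:case2} holds. Otherwise almost all of $Z^s$ lies in $Z(s-1)$, so that each $u\in Z^s\cap Z(s-1)$ is simultaneously rich in round~$s$ with respect to $W^{s,j_s}$ and in some earlier round $t<s$ with respect to $W^{t,j_t}\subseteq W(s-1)$; a second double count combining these two richness witnesses extracts at least $\xi q^{i+1}|E|$ edges of $H[U]$ with at least $i+1$ vertices in $W(s)=W(s-1)\cup W^{s,j_s}$, giving case~\eqref{eq:case1}. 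This last combinatorial step is the main obstacle: one must verify that the threshold $(\xi')^2|V|/(4^{k+1}k^2 K)$ on $|Z^s|$, after the scale conversion $q^{i+1}/q_s^i\leq q(RB^{R-1})^i$ coming from~\eqref{eq:lbq}, precisely meets the target $\xi q^{i+1}|E|$ with the choice of $\xi$ in~\eqref{eq:xi}, which is exactly how the constants have been calibrated.
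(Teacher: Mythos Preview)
Your argument has a genuine gap in the ``otherwise'' branch of the dichotomy. You establish $|Z^s|\geq(\xi')^2|V|/(4^{k+1}k^2K)$, and then claim that if $|Z^s\setminus Z(s-1)|$ falls short of this threshold, a ``second double count combining these two richness witnesses'' produces $\xi q^{i+1}|E|$ edges in $E^{i+1}_U(W(s))$. But this does not follow. A vertex $u\in Z^s\cap Z(s-1)$ satisfies two separate conditions: there are many edges through $u$ with $\geq i$ other vertices in $W^{s,j_s}$, and (independently) many edges through $u$ with $\geq i$ other vertices in some earlier $W^{t,j_t}$. These are different families of edges, and neither gives an edge with $\geq i+1$ vertices in $W(s)$ unless $u$ itself lies in $W(s)$---which $u\in Z(s-1)\subseteq U$ certainly does not guarantee. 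There is no way to merge the two ``richness witnesses'' into a single edge picking up an extra vertex of $W(s)$.

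The paper avoids this problem by structuring the case analysis differently. Instead of applying the induction hypothesis to all of $U$ and then splitting on $|Z^s\setminus Z(s-1)|$, it first splits on the size of $U\setminus Z(s-1)$. If $Z(s-1)$ already occupies most of $U$, then by Chernoff many elements of $W^{s,j_s}$ (not of $Z^s$) lie in $Z(s-1)$; each such $u\in W^{s,j_s}\cap Z(s-1)$ comes with edges having $\geq i$ vertices in $W(s-1)$ \emph{plus the vertex $u$ itself}, which now genuinely belongs to $W^{s,j_s}\subseteq W(s)$, yielding~\eqref{eq:case1}. If instead $U'=U\setminus Z(s-1)$ is still large, the induction hypothesis is applied to $U'$ (with suitably recomputed $\beta',\gamma'$), so that the resulting rich set $\hat Z\subseteq U'$ is \emph{automatically disjoint} from $Z(s-1)$, giving~\eqref{eq:case2} without any further dichotomy. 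Your choice to apply the induction hypothesis to $U$ rather than to $U'$ is exactly what forces you into the unworkable branch.
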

Before we verify Claim~\ref{claim:main} we deduce  Lemma~\ref{lem:main} from it.
Let $\cC$ denote the event that the conclusion of Lemma~\ref{lem:main} holds.
If event $\cA$ holds and $\cB_{W(s-1)}$ holds for every $s\in S$, then $\cC$ must hold, since 
\eqref{eq:case2} in Claim~\ref{claim:main} can occur at most
$$
\frac{4^{k+1}k^2K}{(\xi')^2}\overset{\eqref{eq:R}}{<}\frac{\delta}{4} R\leq |S|
$$
times and, therefore, \eqref{eq:case1} in Claim~\ref{claim:main} must occur.
Below we will verify that  this happens with a sufficiently large probability.
Setting $\PP{U(s_0-1)}=1$ for $s_0=\min_{s\in S}s$, we have
\begin{align*}
\PP{\neg \cC}&
\leq \PP{\neg \cA} 
+
\sum_{S\subseteq [R]} 
\sum_{s\in S}
\sum_{U(s-1)} 
\sum_{W(s-1)}
\PP{\neg \cB_{W(s-1)}\mid U(s-1)}\PP{U(s-1)}\,,
\end{align*}
where the first sum runs over all subsets $S\subseteq [R]$ with $|S|\geq \delta R/4$, 
the third sum runs over all choices of 
$U(s-1)=\bigcup_{t\in S, t<s} U_{q_t}^{t,j_t}$ with 
$|U_{q_t}^{t,j_t}|\leq 1.01q_t|U|$, and the inner sum
runs over all $2^{1.01|V|\sum_{t\in S, t<s}q_t}$ 
choices of $W(s-1)\subseteq U(s-1)$.
Therefore, Claims~\ref{claim:1} and~\ref{claim:main} yield
\begin{align*}
\PP{\neg \cC}
&\leq 
2^{-2bq|V|} + 
2^R\sum_{s=1}^R 2^{1.01|V|\sum_{t=1}^{s-1}q_t}\cdot 2^{-2b^*q_s |V|}\\
&\overset{\makebox[0pt]{\scriptsize\eqref{eq:lbq2}}}{\leq}
2^{-2bq|V|} +2^RR2^{-b^*q_{1}|V|}
\overset{\eqref{eq:lbq}}{\leq}
2^{-2bq|V|} +2^RR2^{-b^*q|V|/(RB^{R-1})}
\overset{\eqref{eq:b}}{\leq}
2^{-bq|V|}\,,
\end{align*}
where the last inequality holds for sufficiently large $n$. 
This concludes the proof of Lemma~\ref{lem:main} and it is left to verify
Claim~\ref{claim:main}.\qed

\begin{proof}[Proof of Claim~\ref{claim:main}]
Let $s\in S$, $W(s-1)\subseteq U(s-1)$ be given. Note that this also defines  $Z(s-1)$.
We first observe that  property~\iti{i} of Claim~\ref{claim:main} holds with high probability.
In fact, due to Chernoff's inequality, 
with probability at least
$$
1-2\exp(-\delta^2q_s|U|/30000)\overset{\eqref{eq:b*}}{\geq} 1-2^{-3b^*q_s|V|}
$$
we even have  
\begin{equation}\label{eq:UCh}
|U_{q_s}^{s,j_s}|=(1\pm0.01\delta)q_s|U|
\end{equation}
and below we assume that~\eqref{eq:UCh} holds.
We distinguish two cases for property~\iti{ii}.
\begin{case}[$|U\setminus Z(s-1)|<(\gamma-3\delta/4) |U|$]\rm
Due to Chernoff's inequality with probability at least
$$
1-2\exp(-\delta^2 (\alpha+\delta/4)q_s |U|/192)
\overset{\eqref{eq:b*}}{\geq}
1-2^{-3b^*q_s|V|}
$$
we have
$$
|U^{s,j_s}_{q_s}\setminus Z(s-1)|\leq \left(\gamma-\tfrac{5}{8}\delta\right)|U^{s,j_s}_{q_s}|\,.
$$
Since $s\in S$  it follows that
$$
|W^{s,j_s}\cap Z(s-1)|\geq \frac{\delta}{8} |U^{s,j_s}_{q_s}|
\overset{\eqref{eq:UCh}}{\geq} 0.99\frac{\delta}{8} q_s |U|
\geq 
\frac{\delta\beta}{9} q_s |V|\geq \frac{\delta^2}{9} q_s |V|\,.
$$
Hence the definition of $Z(s-1)\subseteq  \bigcup_{t\in S, t< s}Z^s$  and $q_1\leq q_t$ for all $t\in S$
yields
\begin{align*}
|E_U^{i+1}(W(s))|&\geq
\frac{\delta^2}{9} q_s |V| \cdot \frac{1}{k}\frac{\xi'}{2}q_1^{i}\frac{|E|}{|V|}\\
&\geq
\frac{\xi'\delta^2}{18k}q_1^{i+1}|E| 
\overset{\eqref{eq:lbq}}{\geq} 
\frac{\xi'\delta^2}{18k(RB^{R-1})^{i+1}}q^{i+1}|E| \overset{\eqref{eq:xi}}{\geq} \xi q^{i+1}|E|\,.
\end{align*}
In other words, for this case we showed that 
alternative~\eqref{eq:case1} happens with probability at least $1-2\cdot 2^{-3b^*q_s|V|}\geq 1-2^{-2b^*q_s|V|}$.
\end{case}
\begin{case}[$|U\setminus Z(s-1)|\geq (\gamma-3\delta/4) |U|$]\rm
In this case we consider 
$$
U'=U\setminus Z(s-1)\,.
$$
We set 
$$
\beta'= \frac{|U'|}{|V|}
\qand
\gamma'=\left(\gamma-\frac{7\delta}{8}\right)\frac{|U|}{|U'|}\,.
$$
Clearly, 
$\beta'\in(0,1]$,
$$
0<\gamma'\leq \frac{\gamma-7\delta/8}{\gamma-3\delta/4}\leq 1\,,
$$ 
and
$$
\beta'\gamma'=\left(\gamma-\frac{7\delta}{8}\right)\frac{|U|}{|V|}
\geq \left(\gamma-\frac{7\delta}{8}\right)\beta
\geq \gamma\beta -\frac{7\delta}{8}\geq \alpha+\frac{\delta}{8}\,.
$$
Hence,  
we can apply the induction assumption to $U'$. More precisely, the induction assumption asserts 
that with probability at least 
$$
1-2^{b'q_s|V|}
$$
every subset $\hW'\subseteq  U'_{q_s}$ with $\hW'\geq \gamma' |U'_{q_s}|$ satisfies
\begin{equation}\label{eq:IA}
\left|E^i_{U'}(\hW')\right|\geq \xi' q_s^{i} |E|\,.
\end{equation}
Note that, in fact,
$$
q_s
\overset{\eqref{eq:lbq}}{\geq} 
\frac{q}{RB^{R-1}}
\geq 
\frac{Cp}{RB^{R-1}}
\overset{\eqref{eq:C}}{\geq} 
C'p\,.
$$

We split the random subset $U^{s,j_s}_{q_s}=U'_{q_s}\dcup U''_{q_s}$, where
$$
U'_{q_s}=U^{s,j_s}_{q_s}\setminus Z(s-1)
\qand
U''_{q_s}=U^{s,j_s}_{q_s}\setminus U'_{q_s}\,.
$$ 
Similarly, we split $W^{s,j_s}=W'\dcup W''$ where 
$W'=W^{s,j_s}\cap U'_{q_s}$ and $W''=W^{s,j_s}\cap U''_{q_s}$.

It follows again from Chernoff's
inequality that 
\begin{equation}\label{eq:U'}
|U'_{q_s}|= \left(1\pm\frac{\delta}{16}\right)q_s|U'|
\end{equation}
holds with probability at least
$$
1-2\exp(-\delta^2q_s|U'|/768)\overset{\eqref{eq:b*}}{\geq} 1-2^{-3b^*q_s|V|}\,.
$$
We distinguish two sub-cases depending on the size of $W''$.
\begin{subcase}[$|W''|> \delta |U^{s,j_s}_{q_s}|/8$]\rm 
In this case, it follows from the $W''\subseteq Z(s-1)$
\begin{align*}
|E_U^{i+1}(W(s))|&\geq|W''|\cdot \frac{1}{k}  \frac{\xi'}{2}q_1^{i}\frac{|E|}{|V|}
\geq \frac{\delta}{8} |U^{s,j_s}_{q_s}|\cdot \frac{\xi'}{2k}q_1^{i}\frac{|E|}{|V|}\\
&\overset{\makebox[0pt]{\scriptsize\eqref{eq:UCh}}}{\geq}
 \frac{\delta}{9}q_s|U|\cdot \frac{\xi'}{2k}q_1^{i}\frac{|E|}{|V|}
 \geq \frac{\delta\beta}{9}q_s\cdot \frac{\xi'}{2k}q_1^{i}|E|
 \geq \frac{\delta^2\xi'}{18k}q_1^{i+1}|E|
 \overset{\eqref{eq:xi}}{\geq} \xi q^{i+1}|E|\,.
\end{align*}
In other words, for this case we showed that 
alternative~\eqref{eq:case1} happens with probability at least $1-2\cdot 2^{-3b^*q_s|V|}\geq 1- 2^{-2b^*q_s|V|}$.
\end{subcase}
\begin{subcase}[$|W''|\leq \delta |U^{s,j_s}_{q_s}|/8$]\rm
In this case we appeal to the $(K,\bp)$-boundedness of $\bH$. It follows from Proposition~\ref{prop:Kpbdd}
and the choice of $\eta$ in~\eqref{eq:eta} that with probability at least 
$$
1-2^{-\bdel q_s |V|+1+\log_2k}
$$
there exists a set $X\subseteq U'_{q_s}$ such that
\begin{equation}\label{eq:eta2}
|X|\leq \eta q_s |V|
\overset{\eqref{eq:eta}}{\leq}
\frac{\delta^2}{16}q_s|V|
\leq \frac{\delta}{16}(\alpha+\delta)q_s|V|
\leq \frac{\delta}{16}\beta q_s|V|
\leq \frac{\delta}{16}q_s|U|
\overset{\eqref{eq:UCh}}{\leq} \frac{\delta}{8}|U_{q_s}^{s,j_s}|
\end{equation}
and
\begin{align}\label{eq:DB}
\sum_{u\in U'}\deg_{i}^2(u,W'\setminus X,U')
&\overset{\makebox[0pt]{\scriptsize\eqref{eq:deg2}}}{\leq}
\sum_{u\in U'}\deg_{i}^2(u,W'\setminus X)\nonumber\\
&\leq
\sum_{u\in U'}\deg_{i}^2(u,U'_{q_s}\setminus X)
\leq
4^kk^2Kq_s^{2i}\frac{|E|^2}{|V|}
 \,.
\end{align}
Consider the set 
$$
\hW'=W'\setminus X\,.
$$
Since $s\in S$, it follows from~\eqref{eq:eta2} and the assumption of this case that 
$$
|\hW'|
\geq |W^{s,j_s}|-|W''|-|X|
\geq \left(\gamma-\frac{\delta}{2}\right)|U^{s,j_s}_{q_s}|
- 2\frac{\delta}{8}|U^{s,j_s}_{q_s}|
\geq \left(\gamma-\frac{3\delta}{4}\right)|U^{s,j_s}_{q_s}|\,.
$$
Furthermore assertions~\eqref{eq:UCh} and~\eqref{eq:U'} yield
$$
\frac{|\hW'|}{|U'_{q_s}|}
\geq 
\left(\gamma-\frac{3\delta}{4}\right)\frac{|U^{s,j_s}_{q_s}|}{|U'_{q_s}|}
\geq \frac{(\gamma-3\delta/4)(1-\delta/100)}{1+\delta/16}\frac{|U|}{|U'|}
\geq \left(\gamma-\frac{7\delta}{8}\right)\frac{|U|}{|U'|}=\gamma'
$$
In other words, $\hW'$  satisfies $|\hW'|\geq \gamma'|U'_{q_s}|$ and from the induction assumption
we infer that~\eqref{eq:IA} holds with probability at least 
$1-2^{-b'q_s|V|}$ and then
\begin{equation}\label{eq:IA2}
 \sum_{u\in U'}\deg_{i}(u, \hW',U')\geq |E^{i}_{U'}(\hW')|\geq \xi'q_s^{i} |E|\,.
\end{equation}
For 
$$
\hZ=\left\{u\in U'\colon \deg_{i}(u, \hW',U')\geq \frac{\xi'}{2}q_s^{i} \frac{|E|}{|V|}\right\}
$$
it follows from the  Cauchy-Schwarz inequality
\begin{align*}
4^kk^2Kq_s^{2i}\frac{|E|^2}{|V|}
\overset{\eqref{eq:DB}}{\geq} \sum_{u\in U'}\deg_{i}^2(u,\hW',U')
&\geq \sum_{u\in \hZ}\deg_{i}^2(u,\hW',U')\\
&\geq \frac{1}{|\hZ|}\left(\sum_{u\in \hZ}\deg_{i}(u,\hW',U')\right)^2
\overset{\eqref{eq:IA2}}{\geq} \frac{1}{|\hZ|}\left(\frac{\xi'q_s^{i} |E|}{2}\right)^2\,.
\end{align*}
Consequently, 
$$
|\hZ|\geq \frac{(\xi')^2}{4^{k+1}k^2K}|V|\,.
$$
Since $\hZ\subseteq U'=U\setminus Z(s-1)$ we have 
$\hZ$ is disjoint from $Z(s-1)$. Furthermore, by definition of $\hZ$ we have 
$\hZ\subseteq Z^s$.
Therefore, 
\eqref{eq:case2} of Claim~\ref{claim:main} holds with probability at least
$$
1-2\cdot2^{-3b^*q_s|V|}-2^{-\bdel q_s|V|+1+\log_2k}-2^{-b'q_s|V|}
\overset{\eqref{eq:b*}}{\geq}
1-2^{-2b^*q_s|V|}
\,,
$$
which concludes the proof of Claim~\ref{claim:main}.\qed
\end{subcase}
\end{case}\let\qed\relax
\end{proof}

\section{Proof of the new results}\label{sec:main_results_pf}
In this section we prove Theorems~\ref{thm:Sz},~\ref{thm:FK},~\ref{thm:dr},~\ref{thm:Schur}, and~\ref{thm:Tur}.
While the involved 0-statements will follow from standard probabilistic arguments, the 1-statement of those 
results will follow from Theorem~\ref{thm:main}.

\subsection{Proof of Theorems~\ref{thm:Sz} and~\ref{thm:FK}}
Clearly Theorem~\ref{thm:Sz} follows from Theorem~\ref{thm:FK}
applied with $\l=1$ and $F=[k]$ and it suffices to verify Theorem~\ref{thm:FK}.

\subsubsection*{The 0-statement of Theorem~\ref{thm:FK}} We start with the 0-statement of 
the theorem. Let $F\subseteq \NN^\l$ be a finite subset with $|F|\geq 3$ and $\eps>0$ be given and set 
$$
c=\left(\frac{1-2\eps}{2}\right)^{1/(|F|-1)}\,.
$$ 
We distinguish different cases depending on the sequence $\bq=(q_n)$.
\setcounter{case}{0}
\begin{case}[$q_n\ll n^{-(\l+1)/|F|}$]\rm In this case the expected number of homothetic 
copies of $F$ in~$[n]^\l_{q_n}$ tends to 0. Hence, we infer from Markov's inequality
that a.a.s.\ $[n]^\l_{q_n}$ contains no homothetic copy of $F$, which yields the claim in that range.
\end{case}
\begin{case}[$n^{-\l}\ll q_n \ll n^{-1/(|F|-1)}$]\rm In this range the expected number of homothetic 
copies of $F$ in $[n]^\l_{q_n}$ is asymptotically smaller than the expected number of elements in $[n]^\l_{q_n}$. Moreover, it follows from Chernoff's inequality that a.a.s.\ $|[n]^\l_{q_n}|$
is very close to its expectation. Consequently, it follows from Markov's inequality that 
a.a.s.\ the number of homothetic copies of $F$ in $[n]^\l_{q_n}$ is $o(|[n]^\l_{q_n}|)$. Therefore,
by removing one element from every homothetic copy of $F$ in  $|[n]^\l_{q_n}|$ a.a.s.\ 
we obtain a subset
 $Y$ of size $|Y|\geq \eps|[n]^\l_{q_n}|$, which contains no homothetic copy of $F$ at all, which yields the 
 0-statement in this  case.
\end{case}
Note that due to $|F|\geq 3$ the ranges considered in Cases 1 and 2 overlap.
Similarly, the range considered in the case below overlaps with the one from Case~2.
\begin{case}[$n^{-(\l+1)/|F|}\ll q_n\leq cn^{-1/(|F|-1)}$]\rm
Again appealing to Chernoff's inequality applied to the size of $[n]^\l_{q_n}$
we infer that it suffices to show that a.a.s.\ 
the number of homothetic copies of $F$ in $[n]^\l_{q_n}$ is at most $(1-2\eps)q_n n^\l$.

Let $Z_F$ be the random variable denoting the number of homothetic copies
of $F$.
Clearly, $\EE{Z_F}\leq q_n^{|F|}n^{\l+1}$ and standard calculations show that 
the variance of $Z_F$ satisfies 
$$
\var{Z_F}
=
O\left(q_n^{2|F|-1} n^{\l+2}+q_n^{|F|}n^{\l+1}\right)\,.
$$
Consequently, Chebyshev's inequality yields
$$
\PP{Z_F\geq 2q_n^{|F|}n^{\l+1}}
\leq 
\frac{\var{Z_F}}{q_n^{2|F|}n^{2\l+2}}
=
O\left(\frac{1}{q_n n^\l}+\frac{1}{q^{|F|}_n n^{\l+1}}\right)
=
o(1)\,,
$$
due to the range of $q_n$ we consider in this case.
Hence, the claim follows from the choice of $c$, which yields
\[
2q_n^{|F|}n^{\l+1}\leq (1-2\eps)q_n n^\l\,.\eqqed
\]
\end{case}
\subsubsection*{The 1-statement of Theorem~\ref{thm:FK}}
We now turn to the 1-statement of Theorem~\ref{thm:FK}.
We first note that if $q_n=\Omega(1)$, then the theorem follows directly 
from Chernoff's inequality combined with the original result of 
Furstenberg and Katznelson. Hence we can assume w.l.o.g.\ $q_n=o(1)$.

Let $F\subseteq \NN^\l$ with $k=|F|\geq 3$ and $\eps\in(0,1)$.
We shall apply Theorem~\ref{thm:main}. For that we consider the following sequence 
of $k$-uniform hypergraphs $\bH=(H_n=(V_n,E_n))_{n\in\NN}$.
Let $V_n=[n]^\l$ and let every homothetic copy of $F$ form an edge in~$E_n$. In particular, $|E_n|=\Theta(n^{\l+1})$.
We set $p_n=n^{-1/(k-1)}$, $\bp=(p_n)_{n\in\NN}$ and $\alpha=0$.
Clearly, for those definitions the conclusion of Theorem~\ref{thm:main} yields 
the 1-statement of Theorem~\ref{thm:FK}.
In order to apply Theorem~\ref{thm:main} we have to verify the following three conditions
\begin{itemize}
\item[\iti{a}] $p_n^k|E_n|\to\infty$ as $n\to\infty$,
\item[\iti{b}] $\bH$ is $\alpha$-dense, and 
\item[\iti{c}] $\bH$ is $(K,\bp)$-bounded for some $K\geq 1$.
\end{itemize}

By definition of $p_n$ and $H_n$ we have
$$
p_n^k|E_n|=\Omega\left(n^{-k/(k-1)}n^{\l+1}\right)=\Omega\left(n^{\l-1/(k-1)}\right)\,,
$$ 
which yields~\iti{a}, as $\l\geq 1$ and $k\geq 3$.

Condition~\iti{b} holds, due to  work of Furstenberg and Katznelson~\cite{FuKa78}.
In fact, it follows from the result in~\cite{FuKa78}, 
that for every configuration $F\subseteq \NN^\l$  and every 
$\eps>0$ there exist $\zeta>0$ and $n_0$ such that for every $n\geq n_0$ every subset 
$U\subseteq [n]^\l$ with $|U|\geq \eps n^\l$ contains at least $\zeta n^{\l+1}$ homothetic copies of $F$.
In other words, $\bH$ is $0$-dense.

Hence, it is only left to verify condition~\iti{c}. We have to show that
for every $i\in[k-1]$ and $q\geq p_n=n^{-1/(k-1)}$ we have
\begin{equation}\label{eq:FKc}
\mu_i(H_n,q)=\EE{\sum_{v\in V_n} \deg_i^2(v,V_q)}
=
O\left(q^{2i}n^{\l+2}\right)
=O\left(q^{2i}\frac{|E_n|^2}{|V_n|}\right)\,.
\end{equation}
It follows from the definition of $\deg_i$ in~\eqref{eq:defdegi} that $\mu_i(H_n,q)$
is the expected number of pairs $(F_1,F_2)$ of homothetic copies of $F$ which share 
at least one point $v$ and at least $i$ points different from $v$ of each copy are contained in $[n]^\l_{q}$.
The expected number of such pairs $(F_1,F_2)$ which share exactly one point can be bounded by
$O\left(q^{2i}n^{\l+2}\right)$.
Since for every fixed homothetic copy $F_1$ there exist only constantly many (independent of $n$) 
other copies $F_2$, which share two points with $F_1$, 
the expected number of such pairs $(F_1,F_2)$ with $|F_1\cap F_2|\geq 2$ is bounded by
$$
O\left(q^in^{\l+1}\right)=O\left(q^{2i}n^{\l+2}\right)\,,
$$
since $q\geq Cp_n\geq Cn^{-1/(k-1)}\geq Cn^{-1/i}$.
Consequently,~\eqref{eq:FKc} holds, which concludes the proof of Theorem~\ref{thm:FK}.
\qed

\subsection{Proof of Theorem~\ref{thm:dr}}
\label{sec:pf_dr} 
The proof of the 0-statement follows directly from the 0-statement of Theorem~1.1 in~\cite{RR97}.
Those authors showed that for every irredundant, density regular $\l\times k$ matrix with rank $\l$
there exists a constant $c>0$ such that for $q_n\leq cn^{-m(A)}$ a.a.s.\
$[n]_{q_n}$ can be partitioned into two classes such that none of them contains a distinct-valued 
solution of the homogeneous system given by~$A$. Clearly, this implies the $0$-statement 
of Theorems~\ref{thm:dr} for every $\eps\in(0,1/2)$.

\subsubsection*{The 1-statement of Theorem~\ref{thm:dr}}
First we note that if $q_n=\Omega(1)$, then the statement follows directly 
from Chernoff's inequality combined with the definition of irredundant, density regular matrix.

Let $A$ be an irredundant, density regular $\l\times k$ integer matrix of rank $\l$ and $\eps>0$
For the application of Theorem~\ref{thm:main} we consider the following sequence 
of $k$-uniform hypergraphs $\bH=(H_n=(V_n,E_n))_{n\in\NN}$.
Let $V_n=[n]$ and for every distinct-valued solution $(x_1,\dots,x_k)$ let $\{x_1,\dots,x_k\}$ 
be an edge of $E_n$.
Moreover we set $p_n=n^{-1/m(A)}$, $\bp=(p_n)_{n\in\NN}$ and $\alpha=0$.
The 1-statement of Theorem~\ref{thm:dr} then follows from the conclusion of Theorem~\ref{thm:main} and
we have to verify the same three conditions~\iti{a}-\iti{c}  as in the proof of the 1-statement of 
Theorem~\ref{thm:FK}.

It was shown in~\cite{RR97}*{Proposition~2.2~\iti{ii}} that $m(A)\geq k-1$ 
and due to Rado's characterization of partition regular matrices (which contains the class of all density regular matrices) we have $k-\l\geq 2$, which yields $|E_n|=\Omega(n^2)$. 
Therefore, we have 
$$
p^k|E_n|=\Omega(n^{-k/(k-1)}\cdot n^2)=\Omega\big(n^{\frac{k-2}{k-1}}\big)
$$
and, hence, condition~\iti{a} is satisfied.

Moreover, based on the Furstenberg-Katznelson theorem from~\cite{FuKa78} it was shown by Frankl, Graham, and R\"odl in~\cite{FGR88}*{Theorem~2}, that the sequence 
of hypergraphs $\bH$ defined above is $0$-dense, i.e., condition~\iti{b} is fulfilled.

Consequently, it suffices to verify that 
$\bH$ is $(K,\bp)$-bounded for some $K\geq 1$. 
For $i\in[k-1]$ and $q\geq p_n=n^{-1/m(A)}$ we have to show that 
$$
\mu_i(H_n,q)=O\left(q^{2i}\frac{|E_n|^2}{n}\right)\,.
$$

Recalling the definitions  of $\mu_i(H_n,q)$ and $H_n=([n],E_n)$
we have
\begin{equation}\label{eq:mu_Rado}
\mu_i(H_n,q)=\EE{\sum_{x\in [n]}\deg_i^2(x,V_{n,q})}=\sum_{x\in [n]}\EE{\deg_i^2(x,V_{n,q})}\,.
\end{equation}
Note that $\EE{\deg_i^2(x,V_{n,q})}$ is the expected number of pairs $(X,Y)\in [n]^k\times [n]^k$ such that
\begin{itemize}
\item[\iti{i}] $x\in X\cap Y$,
\item[\iti{ii}] $X=\{x_1,\dots,x_k\}$ and $Y=\{y_1,\dots,y_k\}$ are solutions of $\cL(A)$, where
$$
A\boldsymbol{x}=A\boldsymbol{y}=\boldsymbol{0}
$$ 
for $\boldsymbol{x}=(x_1,\dots,x_k)^t$ and $\boldsymbol{y}=(y_1,\dots,y_k)^t$, and 
\item[\iti{iii}] $|X\cap ([n]_q\setminus\{x\})|\geq i$ and $|Y\cap ([n]_q\setminus\{x\})|\geq i$.
\end{itemize}
For fixed $x$ and $(X,Y)$ let $w\geq 1$ be the largest integer such that there exist indices
$i_1,\dotsc,i_w$ and  $j_1,\dotsc,j_w$ for which 
\begin{equation}\label{eq:Rad_pf1}
x_{i_1}=y_{j_1},\dotsc,x_{i_w}=y_{j_w}\,.
\end{equation}
Consequently,
\begin{equation}\label{eq:xW1W2}
x\in\{x_{i_1},\dotsc,x_{i_w}\}=\{y_{j_1},\dotsc,y_{j_w}\}
\end{equation}
Set 
$W_1=\{i_1,\dotsc,i_w\}$ and $W_2=\{j_1,\dotsc,j_w\}$.

For fixed sets $W_1$, $W_2\subseteq [k]$ we are going to describe all $(2k-w)$-tuples
$X\cup Y$ satisfying~\iti{ii} and~\eqref{eq:Rad_pf1}. To this end consider the
$2\l\times(2k-w)$ matrix $B$, which arises from two copies $A_1$ and $A_2$ of $A$ with permuted columns.
We set $A_1=(A_{\overline{W}_1}\mid A_{W_1})$
and $A_2=(A_{W_2}\mid A_{\overline{W}_2})$ where for every $\alpha=1,\dotsc,w$ the column of $A_{W_1}$ which is indexed 
by~$i_{\alpha}$ aligns with that column of $A_{W_2}$ which is indexed 
by  $j_\alpha$. Then let
$$
B=\left(\begin{array}{c|c|c}
A_{\overline{W}_1} & A_{W_1} & \boldsymbol{0} \\[1ex]
\hline
\boldsymbol{0} & A_{W_2} & A_{\overline{W}_2}\\[1ex]
\end{array}\right)\,.
$$
Without loss of generality we may assume that $\rank(A_{\overline{W}_1})\geq \rank(A_{\overline{W}_2})$
and, therefore,  
$$
\rank(B)\geq \rank(A)+\rank(A_{\overline{W}_1})\,.
$$
Clearly, the number of $(2k-w)$-tuples
$X\cup Y$ satisfying \iti{ii} and~\eqref{eq:Rad_pf1} equals the number of solutions of the homogeneous 
system given by $B$, which is $O(n^{2k-w-\rank(B)})$. 
Since~$A$ is an irredundant, partition regular matrix, it follows 
from~\cite{RR97}*{Proposition~2.2~\iti{i}} that $\rank(A')=\rank(A)$ for every matrix $A'$ obtained from 
$A$ by removing one column. Consequently, any matrix $B'$ obtained from $B$ by removing one of the middle columns (i.e., one of the $w$ columns 
of $B$ which consist of  a column of $A_{W_1}$ and a column of $A_{W_2}$) satisfies 
$$
\rank(B')\geq \rank(A)+\rank(A_{\overline{W}_1})=\l+\rank(A_{\overline{W}_1})\,.
$$
Therefore, it follows from~\eqref{eq:xW1W2} that the number of such  $(2k-w)$-tuples
that also satisfy condition~\iti{i} for some fixed $x\in [n]$ is at most
\begin{equation}\label{eq:Rad_pf2}
O(n^{2k-w-1-\l-\rank(A_{\overline{W}_1})})\,.
\end{equation}

Finally, we estimate the probability that a $(2k-w)$-tuple
$X\cup Y$ satisfying \iti{i}, \iti{ii}, and~\eqref{eq:Rad_pf1}
also satisfies \iti{iii}. Since $|X\cap Y\cap ([n]_q\setminus\{x\})|=j\leq w-1$ and $q\leq 1$ 
this probability is bounded by
$$
\sum_{j=0}^{w-1}q^{2i-j}=O(q^{2i-w+1})\,.
$$
In view of~\eqref{eq:Rad_pf2} we obtain
\begin{equation}\label{eq:Rad_pf_c}
\sum_{x\in [n]}\EE{\deg_i^2(x,V_{n,q})}
= 
\sum_{x\in [n]}\sum_{w=1}^{k}\sum_{\substack{W_1,W_2\subseteq [k]\\|W_1|=|W_2|=w}}O(n^{2k-w-1-\l-\rank(A_{\overline{W}_1})}q^{2i-w+1})\,.
\end{equation}
Note that if $w=1$, then again due to~\cite{RR97}*{Proposition~2.2~\iti{i}} we have $\rank(A_{\overline{W}_1})=\l$ and, therefore,
the contribution of those terms satisfies
\begin{equation}\label{eq:Rad_pf3}
\sum_{x\in [n]}\sum_{\substack{W_1,W_2\subseteq [k]\\|W_1|=|W_2|=1}}O(n^{2k-2\l-2}q^{2i})
=
O(n^{2k-2\l-1}q^{2i})
=
O\left(q^{2i}\frac{|E_n|^2}{n}\right)\,.
\end{equation}
For $w\geq 2$ and $W_1\subseteq [k]$ with $|W_1|=w$ we obtain from the definition of $m(A)$ and $q\geq n^{-1/m(A)}$ that 
$$
q^{w-1}\geq n^{-w+1-\rank(A_{\overline{W}_1})+\l}\,.
$$
Consequently,
\begin{multline}\label{eq:Rad_pf4}
\sum_{x\in [n]}\sum_{w=2}^{k}\sum_{\substack{W_1,W_2\subseteq [k]\\|W_1|=|W_2|=w}}O(n^{2k-w-1-\l-\rank(A_{\overline{W}_1})}q^{2i-w+1})\\
=
\sum_{x\in [n]}\sum_{w=2}^{k}\sum_{\substack{W_1,W_2\subseteq [k]\\|W_1|=|W_2|=w}}O(n^{2k-2-2\l}q^{2i})\\
=
O(n^{2k-2\l-1}q^{2i})
=
O\left(q^{2i}\frac{|E_n|^2}{n}\right)\,.
\end{multline}

Finally, combining~\eqref{eq:mu_Rado},~\eqref{eq:Rad_pf_c},~\eqref{eq:Rad_pf3}, and~\eqref{eq:Rad_pf4} we obtain
$$
\mu_i(H_n,q)=O\left(q^{2i}\frac{|E_n|^2}{n}\right)\,,
$$
which concludes the proof of the 1-statement of Theorem~\ref{thm:dr}.
\qed

\subsection{Proof of Theorem~\ref{thm:Schur}}
The proof is similar to the proof of Theorem~\ref{thm:FK} and we only sketch the main ideas.
\subsubsection*{The 0-statement of Theorem~\ref{thm:Schur}}
We recall  that for the statement $X\rightarrow_{1/2+\eps} \begin{pmatrix}1 &1 &-1\end{pmatrix}$
we only consider distinct-valued of the Schur equation and we call such a solutions \emph{Schur-triples}.
The expected number of Schur-triples contained in $[n]_{q_n}$ is bounded by $q_n^3n^2$. Consequently,
the 0-statement follows from Markov's inequality if $q_n\ll n^{-2/3}$. In the middle range 
$n^{-1}\ll q_n\ll n^{-1/2}$ it follows, on the one hand, from Chernoff's inequality that a.a.s.\
$|[n]_{q_n}|\geq  q_nn/2$. On the other hand, due to Markov's inequality a.a.s.\ the number
of Schur-triples in $[n]_{q_n}$ is $o(q_n n)$ and, hence, the statement holds in this range of $q_n$.
Finally, if $n^{-2/3}\ll q_n\leq c n^{-1/2}$ for sufficiently small $c>0$, 
then using Chebyshev's inequality one obtains the upper bound of
$$
(1-(1/2+\eps))q_n n/2
$$
on the 
number of Schur-triples in $[n]_{q_n}$, which holds a.a.s. 
Consequently, in view of Chernoff's inequality, a.a.s.\ the random set $[n]_{q_n}$ 
contains a subset of size $(1/2+\eps)|[n]_{q_n}|$, which contains no Schur-triple.\qed

\subsubsection*{The 1-statement of Theorem~\ref{thm:Schur}}
Here the we consider a sequence of $3$-uniform hypergraphs, where $V_n=[n]$ and 
$E_n$ corresponds to all Schur-triples in $[n]$ and we set $p_n=n^{-1/2}$ and $\alpha=1/2$.
For given $\eps\in(0,1/2)$ 
we want to appeal to Theorem~\ref{thm:main} and for that we assume $q_n=o(1)$. 
Again the 1-statement of Theorem~\ref{thm:Schur} follows from Theorem~\ref{thm:main}
and we have to verify the three conditions~\iti{a}-\iti{c}  as in the proof of the 1-statement of 
Theorem~\ref{thm:FK}. 

Condition~\iti{a} follows from the definition of $p_n$ and condition~\iti{c}
follows from similar considerations as in the proof of Theorem~\ref{thm:FK}
for $\l=1$ and $k=3$.

In order to verify condition~\iti{b} we have to show that 
for every $\eps>0$ there exist $\zeta>0$ and $n_0$ such that 
for $n\geq n_0$ every subset $A\subseteq [n]$ with $|A|\geq (1/2+\eps)n$ contains at least~$\zeta n^2$ Schur-triples.

So let $A\subseteq [n]$ satisfy $|A|\geq (1/2+\eps)n$
and set $A_1=A\cap\{1,\dots,(1-\eps)n\}$ (ignoring floors and ceilings).
It follows that for every $z\in A\setminus A_1$ there are 
at least 
$$
\left(\frac{1}{2}+\eps\right)n-\eps n - \frac{(1-\eps)n}{2}
=
\frac{\eps}{2}n
$$ 
pairs 
$x\leq y$ with $x$, $y\in A$ such that $x+y=z$. Hence, if 
$|A\setminus A_1|\geq 3\eps^2n/2$, then~$A$ contains at least 
$3\eps^3n^2/4-n$ Schur-triples and 
the claim follows.  

On the other hand, if $|A\setminus A_1|< 3\eps^2n/2$,
then we have 
$$
|A_1|
\geq
\left(\frac{1}{2}+\eps\right)n-\frac{3\eps^2}{2}n
= 
\left(\frac{1}{2}+\frac{3\eps}{2}\right)(1-\eps) n\,.
$$
In other words, we obtained a density increment of $\eps/2$ on the interval $(1-\eps)n$ and 
the conclusion follows from iterating the above argument.

This concludes the proof of condition~\iti{b} and, therefore, Theorem~\ref{thm:main}
yields the proof of the 1-statement of Theorem~\ref{thm:Schur} for sequences $\bq$
satisfying $q_n=o(1)$. The remaining case, when $q_n=\Omega(1)$ then follows by similar arguments 
as given in~\cite{JLR00}*{Proposition~8.6} and we omit the details.
\qed

\subsection{Proof of Theorem~\ref{thm:Tur}}
\subsubsection*{The 0-statement of Theorem~\ref{thm:Tur}}
Let $F$ be an $\l$-uniform hypergraph with at least one vertex of degree~$2$
and $\eps\in(0,1-\pi(F))$. We set 
$$
c=\frac{1-\pi(F)-\eps}{4}\,.
$$ 

For the proof of the $0$-statement we consider different ranges of $\bq=(q_n)_{n\in\NN}$ 
depending on 
the density of the densest sub-hypergraph of $F$ and depending on $m(F)$.
Let $F'$ be the densest sub-hypergraph of $F$ with $e(F')\geq 1$, i.e., 
$F'$ maximizes $e(F')/v(F')$. Moreover, let $F''$ be one of those sub-hypergraphs 
for which 
$$
d(F'')=m(F)
$$ (see~\eqref{eq:mF} for the definition of those parameters).
Note that $e(F'')\geq 2$, since $F$ contains a vertex of degree at least two.
We consider the following three ranges for~$\bq$.
\setcounter{case}{0}
\begin{case}[$q_n\ll n^{-v(F')/e(F')}$]\rm 
In this range the expected number of 
copies of $F'$ in $G^{(\l)}(n,q_n)$ tends to 0 and, therefore, the statement follows from Markov's inequality.
\end{case}
\begin{case}[$n^{-\l}\ll q_n \ll n^{-1/m(F)}$]\rm 
It follows from the definition of $m(F)$, that in this range the expected number of 
copies of $F''$ in $G^{(\l)}(n,q_n)$ is asymptotically smaller than the expected number of 
of edges of $G^{(\l)}(n,q_n)$. Therefore, applying Markov's inequality 
to the number of copies of $F''$ and Chernoff's inequality to the number of edges 
$G^{(\l)}(n,q_n)$ we obtain that a.a.s.\ the number of copies of $F''$ satisfies 
$o(e(G^{(\l)}(n,q_n)))$. Hence, a.a.s.\ we can obtain an $F''$-free, and consequently, 
an $F''$-free sub-hypergraph of $G^{(\l)}(n,q_n)$ by removing only  $o(e(G^{(\l)}(n,q_n)))$
edges, which yields the statement for this range of $q_n$.
\end{case}
We note that $n^{-\l}\ll n^{-v(F')/e(F')}$ since $F$ contains a vertex of degree~$2$.
In other words, the interval considered in Case~2 overlaps with the interval from 
Case~1.
Similarly, the range considered in the case below overlaps with the one from Case~2.
\begin{case}[$n^{-v(F')/e(F')}\ll q_n\leq cn^{-1/m(F)}$]\rm
Applying again Chernoff's inequality to the random variable $e(G^{(\l)}(n,q_n))$ we see that it suffices to show 
that a.a.s.\ the number of copies of $F''$ 
is at most $(1-(\pi(F)+\eps))q_n n^\l/2$.

Let $Z_{F''}$ be the random variable denoting the number of copies of $F''$
in $G^{(\l)}(n,q_n)$.
Clearly, $\EE{Z_{F''}}\leq q_n^{e(F'')}n^{v(F'')}$ and standard calculations show that 
the variance of $Z_{F''}$ satisfies 
$$
\var{Z_{F''}}
=
O\left(
\frac{q_n^{2e(F'')}n^{2v(F'')}}{\min_{F^*\subseteq F, e(F^*)\geq 1}q_n^{e(F^*)}n^{v(F^*)}}\right)
=
O\left(
\frac{q_n^{2e(F'')}n^{2v(F'')}}{q_n^{e(F')}n^{v(F')}}
\right)\,,
$$
due to the choice of $F'$ being the densest sub-hypergraph of~$F$.
Since $q_n\gg n^{-v(F')/e(F')}$ we have $q_n^{e(F')}n^{v(F')}\to\infty$
and, therefore, 
$$
\var{Z_{F''}}=o\left(q_n^{2e(F'')}n^{2v(F'')}\right)
$$
Consequently, Chebyshev's inequality yields
$$
\PP{Z_{F''}\geq 2q_n^{e(F'')}n^{v(F'')}}
\leq 
\frac{\var{Z_{F''}}}{q_n^{2e(F'')}n^{2v(F'')}}
=
o(1)\,.
$$
Moreover, since $q_n\leq cn^{-1/m(F)}$ and $e(F'')\geq 2$ it follows from the choice of $c$
that
$$
2q_n^{e(F'')}n^{v(F'')}
\leq 
\frac{1-(\pi(F)+\eps)}{2}q_n n^\l\,,
$$
which yields the 0-statement in this case. \qed
\end{case}

\subsubsection*{The 1-statement of Theorem~\ref{thm:Tur}}
Let $F$ be an $\l$-uniform hypergraph with at least one vertex.
For an application of Theorem~\ref{thm:main} we consider the sequence of $k$-uniform hypergraphs 
$\bH=(H_n=(V_n,E_n))_{n\in\NN}$ where $V_n=E(K_n^{(\l)})$ and edges of $E_n$
correspond to copies of~$F$ in~$K_n$. Moreover, we set $p_n=n^{-1/m(F)}$ and $\alpha=\pi(F)$.
Clearly, for this set up the conclusion of Theorem~\ref{thm:main}
yields the 1-statement of Theorem~\ref{thm:Tur} for sequences $\bq$
with $q_n=o(1)$.
In order to apply Theorem~\ref{thm:main} we have to verify the three conditions~\iti{a}-\iti{c}
stated in the proof of the 1-statement of Theorem~\ref{thm:FK}.

Condition~\iti{a} follows from the definitions of $p_n$ and $E_n$ combined. In fact, since 
$F$ contains a vertex of degree at least $2$ we have $m(F)\geq 1/(\l-1)$ and 
$p_n|E_n|=\Omega(n)$.
Such a result was obtained 
by Erd\H os and Simonovits~\cite{ErSi83}*{Theorem~1} and, hence, it is left to verify 
condition~\iti{c} only.

To this end observe that $H_n$ is a regular hypergraph with $\binom{n}{\l}$ vertices and every vertex is contained 
in $\Theta(n^{v(F)-\l})$ edges and that $|E_n|=\Theta(n^{v(F)})$. We will show that 
for $q\geq n^{-1/m(F)}$ and $i\in[k-1]$ we have
\begin{equation*}\label{eq:Ram_pf1}
\mu_i(H_n,q)
=
\EE{\sum_{v\in V_n}\deg_i^2(v,V_{n,q})}
=
\sum_{v\in V}\EE{\deg_i^2(v,V_{n,q})}
=
O\left(q^{2i}\frac{|E_n|^2}{|V_n|}\right)\,.
\end{equation*}
Due to the definition of $\bH$ every $v\in V_n$ 
corresponds to an edge $e(v)$ in $K_n^{(\l)}$. Therefore, the number 
$\EE{\deg_i^2(v,V_{n,q})}$ is the expected number of pairs $(F_1,F_2)$
of copies $F_1 $ and $F_2$ of $F$ in $K_n^{(\l)}$ satisfying
$e(v)\in E(F_1)\cap E(F_2)$
and 
both copies  $F_1 $ and $F_2$  have at least $i$ edges in $E(G^{(\l)}(n,q))\setminus\{e(v)\}$. 
Summing over all such pairs $F_1$ and $F_2$ we obtain
\begin{equation}\label{eq:Ram_pf2}
\begin{split}
\EE{\deg_i^2(v,V_{n,q})}
&\leq
\sum_{F_1, F_2\colon e(v)\in E(F_1)\cap E(F_2)}\sum_{j=0}^{|E(F_1)\cap E(F_2)|-1}q^{2i-j}\\
&=
O\left(\sum_{F_1, F_2\colon e(v)\in E(F_1)\cap E(F_2)}q^{2i-(|E(F_1)\cap E(F_2)|-1)}\right)
\end{split}
\end{equation}
since $q\leq 1$. Furthermore,
\begin{equation}\label{eq:Ram_pf3}
\sum_{F_1, F_2\colon e(v)\in E(F_1)\cap E(F_2)}q^{2i-(|E(F_1)\cap E(F_2)|-1)}
=
O\left(\sum_{J\colon e(v)\in E(J)}n^{2v(F)-2v(J)}q^{2i-(e(J)-1)}\right),
\end{equation}
where the sum on the right-hand side is indexed all hypergraphs $J\subseteq K^{(\l)}_n$ which contain~$e(v)$ 
and which are isomorphic to a sub-hypergraph of~$F$.
It follows from the definition of $m(F)$ and $q\geq n^{-1/m(F)}$ that 
$n^{v(J)}q^{e(J)}=\Omega(qn^\l)$. Combining
this with~\eqref{eq:Ram_pf2} and~\eqref{eq:Ram_pf3} we obtain
\begin{align*}
\EE{\deg_i^2(v,V_{n,q})}
&=
O\left(\sum_{J\colon e(v)\in E(J)}n^{2v(F)-2v(J)}q^{2i-(e(J)-1)}\right)\\
&=
O\left(\sum_{J\colon e(v)\in E(J)}n^{2v(F)-v(J)-\l}q^{2i}\right)\,.
\end{align*}
Moreover, since $v(J)\geq \l$ we have 
$$
\EE{\deg_i^2(v,V_{n,q})}
=
O\left(\sum_{J\colon e(v)\in E(J)}n^{2v(F)-2\l}q^{2i}\right)\,,
$$
and, consequently,
$$
\mu_i(H_n,q)=
\sum_{v\in V_n}O(n^{2v(F)-2\l}q^{2i})\\
=
O(n^{2v(F)-\l}q^{2i})
=
O\left(q^{2i}\frac{|E_n|^2}{|V_n|}\right)\,.
$$
This concludes the proof of condition~\iti{c} and, therefore, Theorem~\ref{thm:main}
yields the proof of the 1-statement of Theorem~\ref{thm:Tur} for sequences $\bq$
satisfying $q_n=o(1)$. The remaining case, when $q_n=\Omega(1)$ then follows by similar arguments 
as given in~\cite{JLR00}*{Proposition~8.6} and we omit the details.
\qed

\subsection*{Acknowledgement} I thank Alan Frieze, Yury Person, and Wojciech Samotij
for their comments on the manuscript. I also thank the referee for her or his detailed work.

\begin{bibdiv}
\begin{biblist}

\bib{BSS90}{article}{
   author={Babai, L{\'a}szl{\'o}},
   author={Simonovits, Mikl{\'o}s},
   author={Spencer, Joel},
   title={Extremal subgraphs of random graphs},
   journal={J. Graph Theory},
   volume={14},
   date={1990},
   number={5},
   pages={599--622},
   issn={0364-9024},
   review={\MR{1073101}},
   doi={10.1002/jgt.3190140511},
}

\bib{Bo78}{book}{
   author={Bollob{\'a}s, B{\'e}la},
   title={Extremal graph theory},
   series={London Mathematical Society Monographs},
   volume={11},
   publisher={Academic Press, Inc. [Harcourt Brace Jovanovich, Publishers],
   London-New York},
   date={1978},
   pages={xx+488},
   isbn={0-12-111750-2},
   review={\MR{506522}},
}

\bib{Bo98}{book}{
   author={Bollob{\'a}s, B{\'e}la},
   title={Modern graph theory},
   series={Graduate Texts in Mathematics},
   volume={184},
   publisher={Springer-Verlag, New York},
   date={1998},
   pages={xiv+394},
   isbn={0-387-98488-7},
   review={\MR{1633290}},
   doi={10.1007/978-1-4612-0619-4},
}

\bib{Bo01}{book}{
   author={Bollob{\'a}s, B{\'e}la},
   title={Random graphs},
   series={Cambridge Studies in Advanced Mathematics},
   volume={73},
   edition={2},
   publisher={Cambridge University Press, Cambridge},
   date={2001},
   pages={xviii+498},
   isbn={0-521-80920-7},
   isbn={0-521-79722-5},
   review={\MR{1864966}},
   doi={10.1017/CBO9780511814068},
}

\bib{BoMu08}{book}{
   author={Bondy, J. A.},
   author={Murty, U. S. R.},
   title={Graph theory},
   series={Graduate Texts in Mathematics},
   volume={244},
   publisher={Springer, New York},
   date={2008},
   pages={xii+651},
   isbn={978-1-84628-969-9},
   review={\MR{2368647}},
   doi={10.1007/978-1-84628-970-5},
}

\bib{CG}{article}{
   author={Conlon, D.},
   author={Gowers, W. T.},
   title={Combinatorial theorems in sparse random sets},
   journal={Ann. of Math. (2)},
   note={To appear}, 
   eprint={1011.4310}
}

\bib{Di10}{book}{
   author={Diestel, Reinhard},
   title={Graph theory},
   series={Graduate Texts in Mathematics},
   volume={173},
   edition={4},
   publisher={Springer, Heidelberg},
   date={2010},
   pages={xviii+437},
   isbn={978-3-642-14278-9},
   review={\MR{2744811}},
   doi={10.1007/978-3-642-14279-6},
}

\bib{Er38}{article}{,
   author={Erd{\H{o}}s, Paul},
   title={On sequences of integers no one of which divides the 
    	product of two others and on some related problems},
   journal={Mitt. Forsch.-Inst. Math. Mech. Univ. Tomsk},
   volume={2},
   year={1938},
   pages={74-82}, 
}

\bib{Er64}{article}{
   author={Erd{\H{o}}s, Paul},
   title={On extremal problems of graphs and generalized graphs},
   journal={Israel J. Math.},
   volume={2},
   date={1964},
   pages={183--190},
   issn={0021-2172},
   review={\MR{0183654}},
}

\bib{ErSi66}{article}{
   author={Erd{\H{o}}s, Paul},
   author={Simonovits, Mikl{\'o}s},
   title={A limit theorem in graph theory},
   journal={Studia Sci. Math. Hungar},
   volume={1},
   date={1966},
   pages={51--57},
   issn={0081-6906},
   review={\MR{0205876}},
}

\bib{ErSi83}{article}{
   author={Erd{\H{o}}s, Paul},
   author={Simonovits, Mikl{\'o}s},
   title={Supersaturated graphs and hypergraphs},
   journal={Combinatorica},
   volume={3},
   date={1983},
   number={2},
   pages={181--192},
   issn={0209-9683},
   review={\MR{726456}},
   doi={10.1007/BF02579292},
}

\bib{ErSt46}{article}{
   author={Erd{\H{o}}s, P.},
   author={Stone, A. H.},
   title={On the structure of linear graphs},
   journal={Bull. Amer. Math. Soc.},
   volume={52},
   date={1946},
   pages={1087--1091},
   issn={0002-9904},
   review={\MR{0018807}},
}

\bib{ErTu36}{article}{
   author={Erd{\H{o}}s, Paul},
   author={Tur{\'a}n, Paul},
   title={On Some Sequences of Integers},
   journal={J. London Math. Soc.},
   volume={S1-11},
   date={1936},
   number={4},
   pages={261},
   review={\MR{1574918}},
   doi={10.1112/jlms/s1-11.4.261},
}

\bib{FGR88}{article}{
   author={Frankl, P.},
   author={Graham, R. L.},
   author={R{\"o}dl, V.},
   title={Quantitative theorems for regular systems of equations},
   journal={J. Combin. Theory Ser. A},
   volume={47},
   date={1988},
   number={2},
   pages={246--261},
   issn={0097-3165},
   review={\MR{930955}},
   doi={10.1016/0097-3165(88)90020-9},
}

\bib{FR86}{article}{
   author={Frankl, P.},
   author={R{\"o}dl, V.},
   title={Large triangle-free subgraphs in graphs without $K_4$},
   journal={Graphs Combin.},
   volume={2},
   date={1986},
   number={2},
   pages={135--144},
   issn={0911-0119},
   review={\MR{932121}},
   doi={10.1007/BF01788087},
}

\bib{FRS}{article}{
   author={Friedgut, Ehud},
   author={R{\"o}dl, Vojt{\v{e}}ch},
   author={Schacht, Mathias},
   title={Ramsey properties of random discrete structures},
   journal={Random Structures Algorithms},
   volume={37},
   date={2010},
   number={4},
   pages={407--436},
   issn={1042-9832},
   review={\MR{2760356}},
   doi={10.1002/rsa.20352},
}

\bib{Fu94}{article}{
   author={F{\"u}redi, Zolt{\'a}n},
   title={Random Ramsey graphs for the four-cycle},
   journal={Discrete Math.},
   volume={126},
   date={1994},
   number={1-3},
   pages={407--410},
   issn={0012-365X},
   review={\MR{1264510}},
   doi={10.1016/0012-365X(94)90287-9},
}

\bib{FuKa78}{article}{
   author={Furstenberg, H.},
   author={Katznelson, Y.},
   title={An ergodic Szemer\'edi theorem for commuting transformations},
   journal={J. Analyse Math.},
   volume={34},
   date={1978},
   pages={275--291 (1979)},
   issn={0021-7670},
   review={\MR{531279}},
   doi={10.1007/BF02790016},
}

\bib{Ge05}{thesis}{
   author={Gerke, St.},
   title={Random graphs with constraints},
   type={Habilitationsschrift},
   organization={Institut f{\"u}r Informatik, 
   	Technische Universit{\"a}t M{\"u}nchen},
   date={2005},
}

\bib{GSS04}{article}{
   author={Gerke, St.},
   author={Schickinger, T.},
   author={Steger, A.},
   title={$K_5$-free subgraphs of random graphs},
   journal={Random Structures Algorithms},
   volume={24},
   date={2004},
   number={2},
   pages={194--232},
   issn={1042-9832},
   review={\MR{2035876}},
   doi={10.1002/rsa.20000},
}

\bib{GRR96}{article}{
   author={Graham, Ronald L.},
   author={R{\"o}dl, Vojt{\v{e}}ch},
   author={Ruci{\'n}ski, Andrzej},
   title={On Schur properties of random subsets of integers},
   journal={J. Number Theory},
   volume={61},
   date={1996},
   number={2},
   pages={388--408},
   issn={0022-314X},
   review={\MR{1423060}},
   doi={10.1006/jnth.1996.0155},
}

\bib{GRS90}{book}{
   author={Graham, Ronald L.},
   author={Rothschild, Bruce L.},
   author={Spencer, Joel H.},
   title={Ramsey theory},
   series={Wiley Series in Discrete Mathematics and Optimization},
   note={Paperback edition of the second (1990) edition [MR1044995]},
   publisher={John Wiley \& Sons, Inc., Hoboken, NJ},
   date={2013},
   pages={xiv+196},
   isbn={978-1-118-79966-6},
   review={\MR{3288500}},
}

\bib{HKL95}{article}{
   author={Haxell, P. E.},
   author={Kohayakawa, Y.},
   author={{\L}uczak, T.},
   title={Tur\'an's extremal problem in random graphs: forbidding even
   cycles},
   journal={J. Combin. Theory Ser. B},
   volume={64},
   date={1995},
   number={2},
   pages={273--287},
   issn={0095-8956},
   review={\MR{1339852}},
   doi={10.1006/jctb.1995.1035},
}

\bib{HKL96}{article}{
   author={Haxell, P. E.},
   author={Kohayakawa, Y.},
   author={{\L}uczak, T.},
   title={Tur\'an's extremal problem in random graphs: forbidding odd
   cycles},
   journal={Combinatorica},
   volume={16},
   date={1996},
   number={1},
   pages={107--122},
   issn={0209-9683},
   review={\MR{1394514}},
   doi={10.1007/BF01300129},
}

\bib{JLR00}{book}{
   author={Janson, Svante},
   author={{\L}uczak, Tomasz},
   author={Ruci\'nski, Andrzej},
   title={Random graphs},
   series={Wiley-Interscience Series in Discrete Mathematics and
   Optimization},
   publisher={Wiley-Interscience, New York},
   date={2000},
   pages={xii+333},
   isbn={0-471-17541-2},
   review={\MR{1782847}},
   doi={10.1002/9781118032718},
}

\bib{KNS64}{article}{
   author={Katona, Gyula},
   author={Nemetz, Tibor},
   author={Simonovits, Mikl{\'o}s},
   title={On a problem of Tur\'an in the theory of graphs},
   language={Hungarian, with Russian and English summaries},
   journal={Mat. Lapok},
   volume={15},
   date={1964},
   pages={228--238},
   issn={0025-519X},
   review={\MR{0172263}},
}

\bib{KKS98}{article}{
   author={Kohayakawa, Y.},
   author={Kreuter, B.},
   author={Steger, A.},
   title={An extremal problem for random graphs and the number of graphs
   with large even-girth},
   journal={Combinatorica},
   volume={18},
   date={1998},
   number={1},
   pages={101--120},
   issn={0209-9683},
   review={\MR{1645658}},
   doi={10.1007/PL00009804},
}

\bib{KLR96}{article}{
   author={Kohayakawa, Yoshiharu},
   author={{\L}uczak, Tomasz},
   author={R{\"o}dl, Vojt{\v{e}}ch},
   title={Arithmetic progressions of length three in subsets of a random
   set},
   journal={Acta Arith.},
   volume={75},
   date={1996},
   number={2},
   pages={133--163},
   issn={0065-1036},
   review={\MR{1379396}},
}

\bib{KLR97}{article}{
   author={Kohayakawa, Yoshiharu},
   author={{\L}uczak, Tomasz},
   author={R{\"o}dl, Vojt{\v{e}}ch},
   title={On $K^4$-free subgraphs of random graphs},
   journal={Combinatorica},
   volume={17},
   date={1997},
   number={2},
   pages={173--213},
   issn={0209-9683},
   review={\MR{1479298}},
   doi={10.1007/BF01200906},
}

\bib{KRS04}{article}{
   author={Kohayakawa, Yoshiharu},
   author={R{\"o}dl, Vojt{\v{e}}ch},
   author={Schacht, Mathias},
   title={The Tur\'an theorem for random graphs},
   journal={Combin. Probab. Comput.},
   volume={13},
   date={2004},
   number={1},
   pages={61--91},
   issn={0963-5483},
   review={\MR{2034303}},
   doi={10.1017/S0963548303005856},
}

\bib{KST54}{article}{
   author={K{\"o}vari, T.},
   author={S{\'o}s, V. T.},
   author={Tur{\'a}n, P.},
   title={On a problem of K. Zarankiewicz},
   journal={Colloquium Math.},
   volume={3},
   date={1954},
   pages={50--57},
   review={\MR{0065617}},
}

\bib{Kre97}{thesis}{
   author={Kreuter, B.},
   title={Probabilistic versions of Ramsey's and Tur{\'a}n's theorems},
   type={Ph.D. Thesis},
   organization={Humboldt-Universit{\"a}t zu Berlin, Mathematisch-Naturwissenschaftliche Fakult\"at II},
   date={1997},
}

\bib{Ma07}{article}{
   author={Mantel, W.},
   title={Vraagstuk XXVIII},
   journal={Wiskundige Opgaven},
   volume={10},
   date={1907},
   pages={60--61},
}

\bib{Ra33}{article}{
   author={Rado, Richard},
   title={Studien zur Kombinatorik},
   language={German},
   journal={Math. Z.},
   volume={36},
   date={1933},
   number={1},
   pages={424--470},
   issn={0025-5874},
   review={\MR{1545354}},
   doi={10.1007/BF01188632},
}

\bib{RR95}{article}{
   author={R{\"o}dl, Vojt{\v{e}}ch},
   author={Ruci{\'n}ski, Andrzej},
   title={Threshold functions for Ramsey properties},
   journal={J. Amer. Math. Soc.},
   volume={8},
   date={1995},
   number={4},
   pages={917--942},
   issn={0894-0347},
   review={\MR{1276825}},
   doi={10.2307/2152833},
}

\bib{RR97}{article}{
   author={R{\"o}dl, Vojt{\v{e}}ch},
   author={Ruci{\'n}ski, Andrzej},
   title={Rado partition theorem for random subsets of integers},
   journal={Proc. London Math. Soc. (3)},
   volume={74},
   date={1997},
   number={3},
   pages={481--502},
   issn={0024-6115},
   review={\MR{1434440}},
   doi={10.1112/S0024611597000178},
}

\bib{RRS07}{article}{
   author={R{\"o}dl, Vojt{\v{e}}ch},
   author={Ruci{\'n}ski, Andrzej},
   author={Schacht, Mathias},
   title={Ramsey properties of random $k$-partite, $k$-uniform hypergraphs},
   journal={SIAM J. Discrete Math.},
   volume={21},
   date={2007},
   number={2},
   pages={442--460 (electronic)},
   issn={0895-4801},
   review={\MR{2318677}},
   doi={10.1137/060657492},
}

\bib{Schur16}{article}{
   author={Schur, Issai},
   title={\"Uber die Kongruenz $x^m+y^m\equiv z^m (\text{mod}\ p)$},
   journal={Jahresber. Deutsch. Math.-Verein.},
   volume={25},
   year={1916},
   pages={114--117},
}

\bib{SV03}{article}{
   author={Szab{\'o}, Tibor},
   author={Vu, Van H.},
   title={Tur\'an's theorem in sparse random graphs},
   journal={Random Structures Algorithms},
   volume={23},
   date={2003},
   number={3},
   pages={225--234},
   issn={1042-9832},
   review={\MR{1999036}},
   doi={10.1002/rsa.10088},
}

\bib{Sz75}{article}{
   author={Szemer{\'e}di, E.},
   title={On sets of integers containing no $k$ elements in arithmetic
   progression},
   note={Collection of articles in memory of Juri\u\i\ Vladimirovi\v c
   Linnik},
   journal={Acta Arith.},
   volume={27},
   date={1975},
   pages={199--245},
   issn={0065-1036},
   review={\MR{0369312}},
}

\bib{Tu41}{article}{
   author={Tur{\'a}n, Paul},
   title={Eine Extremalaufgabe aus der Graphentheorie},
   language={Hungarian, with German summary},
   journal={Mat. Fiz. Lapok},
   volume={48},
   date={1941},
   pages={436--452},
   review={\MR{0018405}},
}

\end{biblist}
\end{bibdiv}

\end{document}